\definecolor{carlos-color}{rgb}{1.0,0.0,0.0}
\definecolor{applegreen}{rgb}{0.55, 0.71, 0.0}
\definecolor{cadmiumgreen}{rgb}{0.0, 0.42, 0.24}
\definecolor{burntorange}{rgb}{0.8, 0.33, 0.0}
\theoremstyle{plain}
\newtheorem{theorem}{Theorem} [section]
\newtheorem{lemma}[theorem]{Lemma}
\newtheorem{proposition}[theorem]{Proposition}
\newtheorem{corollary}[theorem]{Corollary}
\theoremstyle{definition}
\newtheorem*{teo*}{Theorem}
\theoremstyle{definition}
\newtheorem{definition}[theorem]{Definition}
\newtheorem{remark}[theorem]{Remark}
\newcommand{\esssup}{{\mathrm{ess}\sup}}
\newcommand{\cF}{\mathcal{F}}
\newcommand{\cM}{\mathcal{M}}
\newcommand{\cN}{\mathcal{N}}
\newcommand{\cR}{\mathcal{R}}
\newcommand{\TT}{{\mathbb T}}
\newcommand{\cB}{\mathcal{B}}
\newcommand{\cX}{\mathcal{X}}
\newcommand{\cW}{\mathcal{W}}
\newcommand{\HH}{\mathcal{H}}
\newcommand{\KK}{\mathcal{K}}
\newcommand{\cA}{\mathcal{A}}
\newcommand{\R}{\mathbb{R}}
\newcommand{\Z}{\mathbb{Z}}
\newcommand{\N}{\mathbb{N}}
\newcommand{\CC}{\mathbb{C}}
\newcommand{\la} {\lambda}
\newcommand{\var} {\varepsilon}
\newcommand{\supp}{{\rm supp\,}}
\newcommand{\spn}{{\rm span}}
\begin{document}
	
	\title{Reducing and Invariant subspaces under two commuting shift operators}
	
	\let\thefootnote\relax\footnote{2020 {\it Mathematics Subject Classification:} Primary 47A15, 47A05, 30H10, 47A46.
		
		{\it Keywords:} Invariant subspaces, reducing subspaces, shift operators, Hardy spaces, range functions, operator-valued functions.}

	\author{A. Aguilera}
	\address{ Departamento de Matem\'atica, Universidad de Buenos Aires,
		Instituto de Matem\'atica "Luis Santal\'o" (IMAS-CONICET-UBA), Buenos Aires, Argentina}
	\email{aaguilera@dm.uba.ar}
	
	\author{C. Cabrelli}
	\address{ Departamento de Matem\'atica, Universidad de Buenos Aires,
		Instituto de Matem\'atica "Luis Santal\'o" (IMAS-CONICET-UBA), Buenos Aires, Argentina}
	\email{carlos.cabrelli@gmail.com}
	
	\author{D. Carbajal}
	\address{ Faculty of Mathematics, University of Vienna, Vienna, Austria}
	\email{diana.agustina.carbajal@univie.ac.at}
	
	\author{V. Paternostro}
	\address{ Departamento de Matem\'atica, Universidad de Buenos Aires,
		Instituto de Matem\'atica "Luis Santal\'o" (IMAS-CONICET-UBA), Buenos Aires, Argentina}
	\email{vpater@dm.uba.ar}

	\begin{abstract}
		
		In this article, we characterize reducing and invariant subspaces of  the space of square integrable functions defined in the unit circle and having values in some Hardy space with multiplicity. 
		We consider subspaces that reduce the bilateral shift and at the same time are invariant under the unilateral shift acting locally.
		We also study  subspaces that reduce both operators. The conditions obtained are of the type of  the ones in Helson and Beurling-Lax-Halmos theorems on characterizations of the invariance for the bilateral and unilateral shift.
		The motivations for our study were inspired by recent results on Dynamical Sampling in shift-invariant spaces.

	\end{abstract}

	\maketitle
	\section{Introduction}
	
	Invariant subspaces under shift operators have been studied and characterized  by many authors.
	In particular,  Beurling  in 1949 \cite{Ber49} proved the celebrated theorem in which he characterizes  the invariant subspaces for the unilateral shift acting on the Hardy space $H^2(\TT)$ where $\TT$ is the unit circle  (see Theorem \ref{invariant_subspace_Hardy}). This theorem led to an enormous amount of work and  has stimulated a very fruitful research in different directions.
	
	Many generalizations and applications of this result have enriched the literature on invariant subspaces.
	Furthermore,  the {\it compression of the shift} acting on  the orthogonal complement of any  non-trivial  invariant subspace for the unilateral shift  in $H^2$, has served as a model of a lavish class of Hilbert space operators. This was the initiation of the theory of model spaces 
	\cite{GMR16}.
	Later, Helson and Lowdenslager \cite{HL61} generalized Beurling's result to $L^2(\TT).$

	In 1959,  Lax \cite{Lax59}  extended Beurling's theorem to Hardy spaces of $\KK$-valued functions where $\KK$ is a finite dimensional Hilbert space, i.e. Hardy spaces with finite multiplicity. Shortly after, in 1961
	Halmos \cite{Ha61}  obtained a  characterization
	for the general case (infinite multiplicity), using a beautiful functional analysis approach. See Theorem \ref{thm:beu-lax-hal}, referred to as Beurling-Lax-Halmos Theorem.
	
	On the other hand, generalizations to invariant subspaces of  $L^2(\TT,\KK)$ by the bilateral shift have been given by Helson and Lowdenslager \cite{HL61,He64} and 
	Srinivasan \cite{Sri64}.
	
	In the present paper, we consider two operators  acting on $L^{2}(\TT,H^{2}_{\KK})$: 
	the bilateral shift $U$ and the unilateral shift $\widehat{S}$,  the  latter acting pointwisely on $H^2_{\KK}$. See Definition \ref{def:S-sombrero}.
	We obtain a characterization, in the line of the theorems of Helson, and Beurling-Lax-Halmos,  for the subspaces that are reducing for  $U$  -- that is, invariant under $U$ and $U^*$ -- and at the same time are invariant under  $\widehat{S}.$ We also characterize the subspaces that are reducing for both, $U$ and $\widehat{S}$.
	
	\subsection{Motivation from Dynamical Sampling}
	The motivation for our study comes from the {\em Dynamical Sampling Problem in shift-invariant spaces } formulated in \cite{ACCP21, ACCP22}. For references on the general problem of dynamical sampling see \cite{DS1,DS2,DS3,DS4,DS5}.
	
	Dynamical sampling involves reconstructing an unknown signal that evolves over time from its spatio-temporal samples. A common scenario in dynamical sampling is when the initial spatial samples are insufficient to fully recover the signal, which requires compensating for this lack of information by sampling the signal at the same spatial locations but at different times. In other words, rather than having a large number of sensors that are activated only once, we activate a smaller number of sensors multiple times. This approach is particularly significant  when the cost of sensors is substantial.
	
	The problem of dynamical sampling can be equivalently stated as the problem of determining when the orbit of a function through a bounded operator in a Hilbert space forms a frame.  A surprising result is that this occurs if and only if the operator is similar to the compression of the shift in a model subspace of the Hardy space in the unit disk \cite{DS5}. Lately, there has been a great  interest in this approach, and researchers have also explored the scenario involving multiple orbits. The generalization  to multiple functions requires consideration of model subspaces in Hardy spaces with {\it multiplicity}, as described in \cite{DS6}. Beurling, Halmos, Helson, Lax  and Lowdenslager, among others, have characterized model spaces in these different contexts for both forward and bilateral iterations of the shift operator.
	
	When considering the iterations of two commuting operators, the model space that arises is the orthogonal complement in the space $L^2(\TT, H^2_\KK)$ of a subspace that is reducing  by the bilateral shift and invariant by the unilateral shift acting locally in the Hardy space $H^2_\KK$. Despite their importance, these subspaces have not been previously characterized. 
	The contribution of this paper is the characterization of subspaces that are both invariant and reducing  for the shifts involved. 
	
	The general setting of two commuting operators acting in a Hilbert space $\HH$
	includes the case where  $\HH$ is an integer translation invariant subspace of $L^2(\R),$  one operator is the translation by the integer 1 and the other is a shift-preserving operator that is iterated forwardly. These subspaces play a crucial role in various applications as sampling, wavelet  and approximation theory.

	\subsection{Notation}
	
	In order to state our results in more detail we need to introduce some notation and known facts.
	Along this paper, all the  Hilbert spaces considered will be complex and separable. In particular the letters $\KK$ and $\HH$  will always denote  Hilbert spaces.

	For Hilbert spaces $\HH$ and $\KK$, we will denote by $\cB(\HH,\KK)$ the set of linear bounded operators from $\HH$ into $\KK$ and $\cB(\HH):=\cB(\HH,\HH)$. We will write $\N_0:= \N\cup \{0\}$ and $\TT:=\{z\in \CC\,:\, |z|=1\}$. If $E$ is a  measurable subset of $\TT$, we will denote by $|E|$ its Lebesgue measure normalized such that $|\TT|=1$. 
	We will use the symbol $\oplus$ to denote the orthogonal sum of subspaces. For $\cN,\cM$ closed subspaces of $\HH$, we will use $\cN^{\perp}$ to denote the orthogonal complement of $\cN$ in $\HH$, $\cM\ominus\cN = \cM\cap\cN^\perp$, and $P_{\mathcal N}$ will denote the orthogonal projection of $\HH$ onto $\mathcal N$.  
	Finally, we will write $\cN\simeq_{\Phi} \cM$ if $\cN$ and $\cM$ are isomorphic through a bounded operator $\Phi$.
	
	\subsection{Spaces of vector-valued functions}\label{sec:vector-valued-functions} A vector-valued (or $\KK$-valued) function $f:\TT\rightarrow \KK$ is said to be measurable if for each $x\in\KK$, the complex-valued function $\la\mapsto \langle f(\la),x \rangle_{\KK}$ is measurable on $\TT$. 
	
	The space  $L^{2}(\TT,\KK)$ is the Hilbert space of all measurable $\KK$-valued functions $f$ such that $\int_{\TT}\|f(\la)\|_{\KK}^{2}\,d\la<\infty$,
	endowed with the inner product 
	\begin{equation}
		\langle f,g \rangle = \int_{\TT} \langle f(\la), g(\la) \rangle_{\KK}\,d\la, \qquad f,g\in L^{2}(\TT,\KK).
	\end{equation}
	
	Given  an orthonormal basis $\mathcal{B} = \{\var_{i}\}_{i\in I}$ of $\KK$ (where $\#I=\dim(\KK)$), we can write 
	\begin{equation}
		f(\la) = \sum_{i\in I} \langle f(\la),\var_{i} \rangle_{\KK} \,\var_{i}, \quad \text{a.e. }\la\in\TT.
	\end{equation}
	Let us call $f_{i} := \langle f(\cdot),\var_{i} \rangle_{\KK}$ the {\it coordinate functions} of $f$  respect to the basis $\mathcal{B}$. It can be seen that $f_i$  belongs to $L^{2}(\TT)$ for every $i\in I$. Indeed, $f_{i}:\TT\to \CC$ is measurable since so is $f:\TT\rightarrow \KK$. On the other hand, $|\langle f(\la),\var_{i} \rangle_{\KK}| \leq  \|f(\la) \|_{\KK} \|\var_{i} \|_{\KK} = \|f(\la) \|_{\KK} $ a.e. $\la\in\TT$ and thus we have that
	\begin{equation}
		\|f_{i}\|_{L^{2}(\TT)}^{2} = \int_{\TT} | f_{i}(\la)|^{2}\,d\la= \int_{\TT} |\langle f(\la),\var_{i} \rangle_{\KK}|^{2} \,d\la
		\leq \int_{\TT}  \|f(\la) \|_{\KK}^{2} \,dz = \|f\|
		< \infty.
	\end{equation}
	
	The Hardy space $H^{2}$ will be the subspace of $L^{2}(\TT)$ consisting of all $f\in L^{2}(\TT)$ whose Fourier coefficients vanish for $n<0$, i.e., 
	\begin{equation}
		H^{2} := \left\{ f\in L^{2}(\TT) : \int_{\TT} f(z)z^{-n}\,dz=0 \text{ for } n<0 \right\}.
	\end{equation}
	The Hardy space with multiplicity is denoted by  $H^{2}_{\KK}:=H^{2}(\TT,\KK)$. It is   the closed subspace of  $L^{2}(\TT,\KK)$ consisting of all functions $f\in L^{2}(\TT,\KK)$ whose coordinate functions $f_{i}$ respect to any orthonormal basis of $\KK$ belong to the  Hardy space $H^{2}$. 
	Equivalently, $H^2_\KK$ is the subspace of  functions in $L^{2}(\TT,\KK)$ with zero negative Fourier coefficients, where these latter are defined in a weak sense (for details see \cite[page 48]{RR}).
	
	In this paper we will consider operators acting on the Hilbert space $L^{2}(\TT, H^{2}_{\KK}),$ that is, measurable functions defined in the circle $\TT,$
	and having values in the space  $H^{2}_{\KK}$. Since there are two variables involved we will stablish the following convention:
	we will use the letter $\lambda$ for the variable of  functions in $L^2(\TT,\HH)$ for any Hilbert space $\HH$ and $z$ for the variable  of functions in $H^2_\KK$. 
	So, 
	for $f\in L^2(\TT,H^2_\KK), \, f(\lambda) \in H^2_\KK \text{ and } f(\lambda)(z) \in \KK$ for a.e. $\lambda,z \in \TT$.

	\subsection{Shift-operators} First, let us define two operators which will play a crucial role in this paper: the bilateral and the unilateral shifts. 
	
	\begin{definition}\label{def:U}
		The operator $U:L^{2}(\TT,\KK)\to L^{2}(\TT,\KK)$ defined by 
		\begin{equation}\label{eq:def_U}
			(U f)(\la) = \la f(\la),\quad  \text{a.e. }\la\in\TT,\,f\in L^{2}(\TT,\KK),
		\end{equation}
		is called the {\it bilateral shift on $L^2(\TT,\KK)$} with multiplicity $\alpha=\dim(\KK)$.
	\end{definition}
	
	Observe that $U$ is unitary and its adjoint operator is given by $(U^{*} f )(\la) = \overline{\la} f(\la)$, for a.e. $\la\in\TT$ and $f\in L^{2}(\TT,\KK)$.			
	
	\begin{definition}\label{def:S}
		The operator $S: H^{2}_{\KK} \rightarrow H^{2}_{\KK}$ given by the restriction of $U$ to $H^{2}_{\KK}$  is called the \textit{unilateral shift on  $H^{2}_{\KK}$} with multiplicity $\alpha=\dim(\KK)$.
	\end{definition}
	Since $H^{2}_{\KK}$ is invariant under $U$ which  is unitary, then the operator $S$ is an isometry. 
	When $\KK=\CC$, we have that $L^2(\TT, \KK)=L^2(\TT)$ and $H^{2}_{\KK}=H^2$, hence the definitions above apply to those spaces accordingly, being the multiplicity $\alpha=1$.

	In the next definition, we will introduce an operator   on $L^{2}(\TT,H^{2}_\KK)$  which  acts \textit{pointwisely} as a unilateral shift operator. 	
	\begin{definition}\label{def:S-sombrero}
		The operator $\widehat{S}: L^{2}(\TT, H^{2}_{\KK}) \rightarrow L^{2}(\TT, H^{2}_{\KK})$ is defined by
		\begin{equation}
			(\widehat{S}f)(\la)=S(f(\la)),\quad \text{a.e. } \la\in \TT,\, f\in L^2(\TT,H^{2}_{\KK}),
		\end{equation}
		where $S$ is the unilateral shift operator on $H^2_\KK$. More precisely, 
		for 	$f\in L^2(\TT,H^{2}_{\KK})$ and for a.e. $\lambda,z\in\TT$
		$$
		(\widehat{S}f)(\la)(z)=S(f(\la))(z)=zf(\lambda)(z).
		$$
	\end{definition}
	The choice of notation $\widehat{S}$ will be clear in Subsection~\ref{sec:inv-subspaces} (cf. Remark \ref{rem:S-sombrero}). 
	An easy computation shows that $\widehat{S}$ and $U$, the bilateral shift on $L^2(\TT,H^2_\KK)$, commute. Moreover, $\widehat{S}$ is an isometry.
	A subspace of $L^2(\TT,H^2_\KK)$ that is reducing for $U$ and $\widehat{S}$ will be called a {\it full-Hardy} space,
	see Subsection \ref{full-Hardy}.

	There is a natural way to construct an orthonormal basis  of $L^2(\TT,\KK)$ or $H^2_\KK$ starting from an orthonormal basis $\cB=\{\var_i\}_{i\in I}$  of $\KK$. Since $\KK$ can be thought as a subspace of $L^2(\TT,\KK)$ (the constant $\KK$-valued functions), we will make an abuse of notation by considering $\{{\var}_i\}_{i\in I}\subset L^2(\TT,\KK)$ as the constant functions with values forming an orthonormal basis of $\KK$. Then, it is easy to see that $\{ U^k{\var}_i\,:\,k\in\Z,\, i\in I\}$ is an orthonormal basis of $L^2(\TT,\KK)$. Analogously, the system $\{S^j{\var}_i\,:\,j\in\N_0,\, i\in I\}$ is an orthonormal basis of~$H^2_\KK$.
	\subsection{Main results}
	We are now ready to state our main results.
	The first one is the characterization of subspaces of $L^2(\TT,H^2_\KK)$ reducing for $U$ and invariant under $\widehat{S}$.
	\begin{theorem}\label{characterization_hatS_inv}
		Let $\mathcal{M}\subseteq L^{2}(\TT,H^{2}_{\KK})$ be a closed subspace. The following statements are equivalent:
		\begin{enumerate}
			\item[\rm i)] $\mathcal{M}$ is reducing for $U$ and invariant under $\widehat{S}$.
			\item[\rm ii)] There exists  a full-Hardy subspace $\cW\subseteq L^{2}(\TT,H^{2}_{\KK})$ and a partial isometry $\Phi:L^{2}(\TT,H^{2}_{\KK}) \rightarrow L^{2}(\TT,H^{2}_{\KK})$ with initial space $\cW$  that commutes with $U$ and $\widehat{S}$
			such that $\cW \simeq_{\Phi} \cM.$
		\end{enumerate}
	\end{theorem}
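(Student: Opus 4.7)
I begin with the easier implication $(ii) \Rightarrow (i)$. Since $\Phi$ commutes with the unitary $U$, multiplying $\Phi U = U\Phi$ by $U^{*}$ on both sides yields $\Phi U^{*} = U^{*}\Phi$. Writing $\cM = \Phi(\cW)$ and using that $\cW$ is reducing for $U$ and invariant under $\widehat{S}$, a direct computation gives $U\cM = \Phi(U\cW) = \cM$, $U^{*}\cM = \Phi(U^{*}\cW) \subseteq \cM$, and $\widehat{S}\cM = \Phi(\widehat{S}\cW) \subseteq \cM$, which is (i).

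For $(i) \Rightarrow (ii)$, my plan is a three-step construction. First, since $\cM$ is $U$-reducing, I invoke Helson's theorem (applied with values in the Hilbert space $H^{2}_{\KK}$) to obtain a measurable range function $\la \mapsto J(\la)$ of closed subspaces of $H^{2}_{\KK}$ such that
\[
\cM = \{f \in L^{2}(\TT, H^{2}_{\KK}) \,:\, f(\la) \in J(\la) \text{ for a.e.\ } \la \in \TT\}.
\]
The $\widehat{S}$-invariance of $\cM$ then translates pointwise to $SJ(\la) \subseteq J(\la)$ a.e., so each $J(\la)$ is an $S$-invariant subspace of $H^{2}_{\KK}$.

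Second, I apply the Beurling--Lax--Halmos theorem fiberwise: for a.e.\ $\la$ there exist a closed subspace $\KK'(\la) \subseteq \KK$ and an inner $\cB(\KK'(\la), \KK)$-valued function $z \mapsto \Theta(\la, z)$ such that $J(\la) = \Theta(\la, \cdot)\, H^{2}_{\KK'(\la)}$ with multiplication by $\Theta(\la, \cdot)$ acting isometrically on $H^{2}_{\KK'(\la)}$. Third, I define the candidate full-Hardy space $\cW$ by the range function $\la \mapsto H^{2}_{\KK'(\la)}$ (this is indeed full-Hardy, because the subspaces of $H^{2}_{\KK}$ reducing for $S$ are exactly the $H^{2}$-spaces of subspaces of $\KK$, obtained by iterating $S$ on a wandering subspace sitting inside $\KK$) and the operator $\Phi: L^{2}(\TT, H^{2}_{\KK}) \to L^{2}(\TT, H^{2}_{\KK})$ by
\[
(\Phi g)(\la) = \Theta(\la, \cdot)\, g(\la) \quad \text{for } g \in \cW, \qquad \Phi|_{\cW^{\perp}} = 0.
\]
Fiberwise isometry of $\Theta(\la, \cdot)$ makes $\Phi$ a partial isometry with initial space $\cW$ and image $\cM$. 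Commutation $\Phi U = U\Phi$ is immediate since $\Phi$ acts fiberwise in $\la$ while $U$ is multiplication by $\la$; commutation $\Phi \widehat{S} = \widehat{S} \Phi$ follows because multiplication by an operator-valued $H^{\infty}$ function on $H^{2}_{\KK}$ commutes with the shift $S$ (which is itself multiplication by $z$).

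The main obstacle will be executing the second step \emph{measurably} in $\la$: the classical Beurling--Lax--Halmos theorem is only an existence statement, and I need $\KK'(\la)$ and $\Theta(\la,z)$ to depend measurably on $\la$ in order for $\cW$ and $\Phi$ to be well-defined. I expect to handle this either by first identifying the wandering subspace $\la \mapsto J(\la) \ominus SJ(\la)$ as a measurable range function (via the $U$-reducing subspace $\cM \ominus \widehat{S}\cM$, which exists since $\widehat{S}$ commutes with $U$) and then producing a measurable isometric parametrization of it by constants in a range function $\la \mapsto \KK'(\la) \subseteq \KK$, or by invoking a measurable version of Beurling--Lax--Halmos tailored to range functions of $S$-invariant subspaces of $H^{2}_{\KK}$.
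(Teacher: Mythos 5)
Your implication (ii) $\Rightarrow$ (i) is correct and is essentially the paper's argument. For (i) $\Rightarrow$ (ii), however, your headline plan --- apply the Beurling--Lax--Halmos theorem fiberwise to each $J(\la)$ and then arrange for the data $(\KK'(\la),\Theta(\la,\cdot))$ to depend measurably on $\la$ --- is precisely the strategy the paper explicitly identifies as unworkable: Beurling--Lax--Halmos is a pure existence statement, and there is no off-the-shelf ``measurable version'' that selects the initial space and the inner function measurably in a parameter. You do flag this obstacle yourself, and the first of your two proposed remedies (pass to the wandering subspace $\cR=\cM\ominus\widehat{S}\cM$ and parametrize its fibers by constants in $\KK$) is in fact exactly the route the paper takes. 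But that remedy is the entire substance of the proof, and it is left unexecuted in your write-up, so as it stands there is a genuine gap rather than a complete argument.

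Concretely, what is missing is the following. The subspace $\cR=\cM\ominus\widehat{S}\cM$ is reducing for $U$ and its range function is $J_\cR(\la)=J_\cM(\la)\ominus S J_\cM(\la)$. Two ingredients then make the measurable parametrization work. First, Halmos's lemma (Lemma \ref{lem:dimR_0}) gives $\dim J_\cR(\la)\le\dim\KK$, so the fibers can indeed be modelled by constants inside $\KK$. Second --- and this is the key tool --- Lemma \ref{measurable_sets} provides a measurable partition $\TT=\bigcup_n A_n$ on which $\dim J_\cR(\la)=n$, together with functions $\phi_i\in L^\infty(\TT,H^2_\KK)$ whose fibers $\{\phi_1(\la),\dots,\phi_n(\la)\}$ form an orthonormal basis of $J_\cR(\la)$ a.e.\ on $A_n$. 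One then takes $J(\la)=\spn\{\var_1,\dots,\var_n\}$ on $A_n$ for a fixed orthonormal basis $\{\var_i\}$ of $\KK$ --- note that you never need the initial space produced by Beurling--Lax--Halmos, only its dimension, which lets you choose $J$ constant on each stratum and hence trivially measurable --- lets $\cW$ be the full-Hardy space with base $J$, and defines $F(\la)(S^j\var_i)=S^j\phi_i(\la)$, extended by zero on $(H^2_{J(\la)})^\perp$. The orthonormality of $\{S^j\phi_i(\la)\}$ coming from \eqref{eq:oplus-S^jR_0} makes $F(\la)$ a partial isometry of $H^2_{J(\la)}$ onto $J_\cM(\la)$ commuting with $S$, and the measurability of $\la\mapsto\langle F(\la)f,g\rangle$ is verified by expanding $P_{H^2_{J(\la)}}f$ in the basis $\{S^j\var_i\}$. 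Without Lemma \ref{measurable_sets} (or an equivalent measurable-selection argument) your $\cW$ and $\Phi$ are not well defined, so you should replace the fiberwise invocation of Beurling--Lax--Halmos by this explicit construction.
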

	
	Our second result gives the uniqueness of the characterization up to an isometry that commutes with $U$ and $\widehat{S}.$
	
	\begin{theorem}\label{thm:uniqueness}
		Let $\cW_1,\cW_2\subseteq L^2(\TT,H^2_\KK)$ be two full-Hardy spaces and $\Phi_1,\Phi_2:L^2(\TT,H^2_\KK)\to L^2(\TT,H^2_\KK)$ two partial isometries with initial spaces  $\cW_1$ and $\cW_2$, respectively. Suppose that $\Phi_1$ and  $\Phi_2$  commute with $U$ and $\widehat{S}$, and that $\Phi_1(\cW_1)= \Phi_2(\cW_2)$.  Then, there exists a partial isometry $\Psi :L^{2}(\TT,H^{2}_{\KK}) \rightarrow L^{2}(\TT,H^{2}_{\KK})$ with initial space $\cW_2$ that commutes with $U$ and $\widehat{S}$ such that $\Psi(\cW_2)=\cW_1$ and $\Phi_2=\Phi_1\Psi$.
	\end{theorem}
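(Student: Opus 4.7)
The natural candidate is $\Psi := \Phi_1^{*}\Phi_2$, motivated by the fact that $\Phi_1$ restricts to an isometric isomorphism $\Phi_1\colon \cW_1\to\cM:=\Phi_1(\cW_1)=\Phi_2(\cW_2)$ whose inverse is $\Phi_1^{*}|_{\cM}$. The plan is thus to take this $\Psi$ and check the four required properties: (a) it is a partial isometry, (b) its initial space is $\cW_2$ and its final space is $\cW_1$, (c) it commutes with both $U$ and $\widehat S$, and (d) $\Phi_1\Psi=\Phi_2$.

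For (a) and (b), I would use the identities $\Phi_i^{*}\Phi_i=P_{\cW_i}$ and $\Phi_i\Phi_i^{*}=P_{\cM}$, which hold because $\Phi_i$ is a partial isometry with initial space $\cW_i$ and final space $\Phi_i(\cW_i)=\cM$. Since $\mathrm{range}(\Phi_2)=\cM$, one has $P_{\cM}\Phi_2=\Phi_2$, so $\Psi^{*}\Psi=\Phi_2^{*}\Phi_1\Phi_1^{*}\Phi_2=\Phi_2^{*}P_{\cM}\Phi_2=\Phi_2^{*}\Phi_2=P_{\cW_2}$; symmetrically $\Psi\Psi^{*}=P_{\cW_1}$. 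Property (d) is equally quick: $\Phi_1\Psi=\Phi_1\Phi_1^{*}\Phi_2=P_{\cM}\Phi_2=\Phi_2$.

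The real content is (c). Commutation with $U$ is cheap: $U$ is unitary, so $\Phi_i U=U\Phi_i$ yields $U^{*}\Phi_i^{*}=\Phi_i^{*}U^{*}$, and unitarity of $U$ upgrades this to $\Phi_i^{*}U=U\Phi_i^{*}$; hence $\Psi U=\Phi_1^{*}\Phi_2 U=\Phi_1^{*}U\Phi_2=U\Phi_1^{*}\Phi_2=U\Psi$. For $\widehat S$ one cannot pass to the adjoint directly, and this is the main obstacle since $\widehat S$ is only an isometry. I would circumvent it by a direct check using that $\cM$ is $\widehat S$-invariant: indeed $\widehat S(\cM)=\widehat S\,\Phi_1(\cW_1)=\Phi_1\,\widehat S(\cW_1)\subseteq \Phi_1(\cW_1)=\cM$, because $\cW_1$ is a full-Hardy space (hence invariant under $\widehat S$) and $\Phi_1$ commutes with $\widehat S$. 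Now fix $f\in L^{2}(\TT,H^{2}_{\KK})$ and set $h:=\Phi_2 f\in\cM$. On the one hand $\Psi\widehat S f=\Phi_1^{*}\Phi_2\widehat S f=\Phi_1^{*}\widehat S h$, a vector in $\cW_1$. On the other hand $\widehat S\Psi f=\widehat S\Phi_1^{*}h$, also in $\cW_1$ since $\cW_1$ is $\widehat S$-invariant. Applying the isometry $\Phi_1|_{\cW_1}$ to both vectors gives, respectively, $\Phi_1\Phi_1^{*}\widehat S h=P_{\cM}\widehat S h=\widehat S h$ (using $\widehat S h\in\cM$) and $\Phi_1\widehat S\Phi_1^{*}h=\widehat S\Phi_1\Phi_1^{*}h=\widehat S P_{\cM}h=\widehat S h$. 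Since $\Phi_1$ is injective on $\cW_1$, the two preimages coincide, proving $\Psi\widehat S=\widehat S\Psi$.

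The argument above handles $f$ in the initial space $\cW_2$ implicitly through $h=\Phi_2 f$; for $f\in\cW_2^{\perp}$ the equality $\Psi\widehat S f=\widehat S\Psi f=0$ is automatic because $\cW_2^{\perp}$ is $\widehat S$-invariant (as $\cW_2$ is reducing for $\widehat S$) and $\Phi_2$ annihilates $\cW_2^{\perp}$. The only subtlety to be careful about throughout is distinguishing \emph{invariance} from \emph{reducing}: we only need $\cM$ to be $\widehat S$-invariant (which it is, as shown), not reducing, and we use that $\cW_1,\cW_2$ are reducing for $\widehat S$ precisely to handle the orthogonal complements.
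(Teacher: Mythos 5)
Your proposal is correct and is essentially the paper's own argument: your $\Psi=\Phi_1^{*}\Phi_2$ coincides with the paper's $(\Phi_1|_{\cW_1})^{-1}\Phi_2$ (since $\mathrm{range}(\Phi_2)=\cM$ and $\Phi_1^{*}|_{\cM}=(\Phi_1|_{\cW_1})^{-1}$), and your key step for the $\widehat S$-commutation --- applying $\Phi_1$ to $\Psi\widehat S f$ and $\widehat S\Psi f$, checking both land in $\cW_1$, and cancelling by injectivity of $\Phi_1$ on $\cW_1$ --- is exactly the paper's cancellation argument. The only cosmetic differences are that you verify the partial-isometry properties via the identities $\Psi^{*}\Psi=P_{\cW_2}$, $\Psi\Psi^{*}=P_{\cW_1}$ rather than by a direct norm computation, and you treat $U$ separately via adjoints where the paper runs the same cancellation argument for both operators.
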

	
	To state our theorem on the  characterization of  subspaces of $L^2(\TT,H^2_\KK)$ that simultaneously reduce $U$ and 
	$\widehat{S}$ we need more notation, this is given in Section \ref{full-Hardy}. 
	
	We want to remark here that our results hold in the more general setting of
	{\it multiplication-invariant spaces} \cite{Sri64, BR}, where the circle $\TT$ with the normalized Lebesgue measure is replaced by a general finite measure space $(X,\mu)$. In particular, this includes the multivariable case of $L^2(\TT^d,H^{2}_{\KK}).$ See Subsection  \ref{MIS}. We stick here with the circle $\TT$ for
	simplicity in the treatment.
	
	About Theorem \ref{characterization_hatS_inv}: The proof of this theorem uses fiberization techniques, a standard tool for reducing subspaces  of $L^2(\TT,\KK)$. 
	Informally, a subspace $\cM$ of $L^2(\TT,H^2_\KK)$ that reduces $U$
	can be described, using a result of Helson, by a {\it measurable range function}, that is, a family of subspaces
	of $H^2_\KK$ indexed by $\TT$, say $\{J_{\cM}(\lambda) \subseteq H^2_\KK: \la \in \TT\}$, `measurable' in a sense that will be defined later. On the other hand, the invariance of $\cM$ under $\widehat{S}$ implies that each {\it fiber space }$J_\cM(\lambda)$ is invariant under the unilateral shift in $H^2_\KK$ for almost every $\lambda$.
	
	The fiberization technique consists in solving the problem in almost each fiber space $J_\cM(\la)$ and transferring the results 
	back to the main space, in our case, from $\{J_\cM(\lambda)\}_\la$ to $\cM$.
	Thus, one can think that by applying the Beurling-Lax-Halmos Theorem to $J_\cM(\lambda)$ for almost each $\lambda \in \TT$ we can deduce the desired characterization. However, this strategy does not work since measurability issues come up and it does not seem possible
	to solve it within this context. Instead, our approach is to construct carefully the needed isometries,  step by step, taking into account the measurability property.
	
	The organization of the paper is as follows. In Section \ref{sec:inv-subspaces} we state classical results on the characterization of invariant and reducing  subspaces of $L^2(\TT,\KK)$ by the bilateral shift $U$ and the invariant subspaces of $H^{2}_{\KK}$ by the unilateral shift $S$. In particular, we review the theory of {\it range fuctions} and list some of their properties, since it is one of the main tools for our characterization. Section \ref{our-theorems} is devoted to the proof of our main results.
	In particular, in Subsection \ref{full-Hardy} we characterize the spaces reducing for $U$ and $\widehat{S}$, the full-Hardy spaces. The proof of the characterization theorem for subspaces reducing for $U$ and invariant under $\widehat{S}$ is in Subsection \ref{characterization} and in Subsection \ref{uniqueness} we prove the uniqueness of the characterization.
	
	\section{Invariant subspaces}\label{sec:inv-subspaces}
	
	In this section we will state some well known results on invariant subspaces. For further details, we refer the reader to \cite{RR, Co}.
	
	Let $\HH$ be a Hilbert space, $A \in \cB(\HH)$ and $\cM\subseteq \HH$ a closed subspace. The subspace $\cM$ is {\it invariant} for $A$ (or $A$-invariant) if $A(\cM)\subseteq \cM$. Furthermore, $\cM$ is {\it reducing} for $A$ (or $\cM$ {\it reduces }$A$) if $\cM$ and $\cM^\perp$ are invariant  under $A$.	 The condition $A(\cM^{\perp})\subseteq \cM^{\perp}$ is equivalent to $A^{*}(\cM)\subseteq \cM$, where $A^{*}$ denotes the adjoint operator of $A$. Hence, $\cM$ is reducing for $A$ if and only if $\cM$ is invariant under $A$ and $A^*$.

	\subsection{Wandering subspaces}		
	Given a Hilbert space $\HH$ and an operator $A\in \cB(\HH)$, a \textit{wandering subspace} $\cR\subseteq \HH$ for $A$ is a subspace such that $\cR\perp A^j(\cR)$ for every $j\geq 1$. When $A$ is an isometry, this latter condition is equivalent to $A^j(\cR)\perp A^{j'}(\cR)$ for every $j,j'\geq 0$ and $j\neq j'$.
	
	The wandering subspaces are related to the invariant subspaces for $A$ as follows: 
	Each wandering subspace $\cR$ produces an invariant subspace for $A$ of the form $\cM_\cR = \bigoplus_{j=0}^{\infty} A^j(\cR)\subseteq\HH$.
	In fact, the correspondence $\cR \mapsto \cM_{\cR}$ between wandering subspaces and invariant subspaces for $A$ is one-to-one, since $\cR=\cM_{\cR}\ominus A(\cM_{\cR})$. 
	
	On the other hand, given an invariant subspace $\cM\subseteq\HH$ for $A$, the subspace $\mathcal R=\cM \ominus A(\cM)$ is a wandering subspace for $A$ and the equation below holds
	\begin{equation}\label{eq:decomp-M-inv-for-A}
		\cM = \bigoplus_{j=0}^{\infty} A^j(\cR) \oplus \bigcap_{j=0}^{\infty} A^j (\cM).
	\end{equation}
	
	We say that $A$ is a \textit{pure isometry} if $\bigcap_{j=0}^{\infty} A^j(\HH)=\{0\}$. For this type of operators, the decomposition in \eqref{eq:decomp-M-inv-for-A} reduces to the following.
	
	\begin{lemma}\label{lem:A-pure}
		Let $\HH$ be a Hilbert space and $A\in\cB(\HH)$ a pure isometry. Then  if $\cM\subseteq \HH$ is an invariant subspace for $A$, we have that 
		\begin{equation}\label{eq:decomp-M-A-pure}
			\cM=\bigoplus_{j=0}^{\infty} A^j(\cR),
		\end{equation}
		where $\cR = \cM \ominus{A}(\cM)$ is the unique wandering subspace for $A$ that satisfies~\eqref{eq:decomp-M-A-pure}.
	\end{lemma}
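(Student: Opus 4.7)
The plan is to derive the lemma directly from the already-stated general decomposition \eqref{eq:decomp-M-inv-for-A}, by showing that the ``residual'' part $\bigcap_{j=0}^{\infty} A^j(\cM)$ vanishes when $A$ is a pure isometry, and then to handle uniqueness by a short computation using that $A$ preserves orthogonal sums.

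First, I would invoke \eqref{eq:decomp-M-inv-for-A} for the invariant subspace $\cM$ and the isometry $A$, which gives
\begin{equation}
\cM = \bigoplus_{j=0}^{\infty} A^j(\cR) \oplus \bigcap_{j=0}^{\infty} A^j(\cM),
\end{equation}
with $\cR = \cM \ominus A(\cM)$ automatically wandering. Since $\cM \subseteq \HH$, monotonicity of $A^j$ gives $A^j(\cM) \subseteq A^j(\HH)$ for every $j \ge 0$, hence
\begin{equation}
\bigcap_{j=0}^{\infty} A^j(\cM) \subseteq \bigcap_{j=0}^{\infty} A^j(\HH) = \{0\},
\end{equation}
where the last equality is exactly the hypothesis that $A$ is a pure isometry. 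Plugging this into the previous display yields \eqref{eq:decomp-M-A-pure}.

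For uniqueness, I would take any wandering subspace $\cR' \subseteq \HH$ for $A$ with $\cM = \bigoplus_{j=0}^{\infty} A^j(\cR')$ and show $\cR' = \cR$. Applying the isometry $A$ termwise (which preserves orthogonality of the summands, since $A^j(\cR')$ and $A^{j'}(\cR')$ being orthogonal implies $A^{j+1}(\cR')$ and $A^{j'+1}(\cR')$ are orthogonal) gives
\begin{equation}
A(\cM) = \bigoplus_{j=0}^{\infty} A^{j+1}(\cR') = \bigoplus_{j=1}^{\infty} A^j(\cR').
\end{equation}
Subtracting this from the decomposition of $\cM$ leaves $\cM \ominus A(\cM) = \cR'$, hence $\cR' = \cR$.

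I do not expect a real obstacle here: the content of the lemma is essentially a specialization of the already-recorded Wold-type decomposition \eqref{eq:decomp-M-inv-for-A}, and the only ingredient to add is the inclusion $A^j(\cM) \subseteq A^j(\HH)$. The mildly delicate point is making sure that applying $A$ to the infinite orthogonal sum $\bigoplus_j A^j(\cR')$ is legitimate, but this is immediate from continuity and isometry of $A$.
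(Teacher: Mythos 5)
Your proposal is correct and follows essentially the same route as the paper: the paper's proof is exactly the one-line observation that $\bigcap_{j=0}^{\infty} A^j(\cM) \subseteq \bigcap_{j=0}^{\infty} A^j(\HH) = \{0\}$ applied to \eqref{eq:decomp-M-inv-for-A}. Your uniqueness computation is fine but merely re-derives the already-recorded fact that the correspondence $\cR \mapsto \cM_{\cR}$ is one-to-one via $\cR = \cM_{\cR} \ominus A(\cM_{\cR})$, which is what the paper implicitly relies on.
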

	
	\begin{proof}
		Observe that $\bigcap_{j=0}^{\infty} A^j (\cM) \subseteq \bigcap_{j=0}^{\infty} A^j (\HH)=\{0\}$, and so from \eqref{eq:decomp-M-inv-for-A} we deduce~\eqref{eq:decomp-M-A-pure}.
	\end{proof}
	
	\begin{lemma}\label{lem:isometry-inv-wan}
		For $i=1,2$, let $\HH_i$ be a Hilbert space and let $\cM_i\subseteq \HH_i$ be an invariant subspace for the pure isometry $A_i\in\cB(\HH_i)$, with associated wandering subspace $\cR_i$. Then, if $\Phi:\HH_1\to\HH_2$ is an isometry such that $\Phi A_1 = A_2 \Phi$, we have that
		$\cM_1\simeq_{\Phi} \cM_2$ if and only if $\cR_1\simeq_{\Phi} \cR_2$.
	\end{lemma}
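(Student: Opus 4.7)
The plan is to exploit Lemma~\ref{lem:A-pure}, which gives $\cM_i = \bigoplus_{j=0}^{\infty} A_i^j(\cR_i)$ for $i=1,2$, together with the extended intertwining $\Phi A_1^j = A_2^j \Phi$ (immediate by induction from the hypothesis $\Phi A_1 = A_2 \Phi$). A key preliminary observation I would record first is that, since $\Phi$ is an isometry of Hilbert spaces, it preserves inner products (by polarization), hence orthogonality; being continuous and linear it also commutes with the closed orthogonal direct sum of any family of pairwise orthogonal closed subspaces.

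For the forward direction, I would start from $\Phi(\cM_1)=\cM_2$ and apply the intertwining to obtain $\Phi(A_1(\cM_1)) = A_2(\Phi(\cM_1)) = A_2(\cM_2)$. The orthogonal splitting $\cM_1 = \cR_1 \oplus A_1(\cM_1)$ is then carried by $\Phi$ to an orthogonal splitting $\cM_2 = \Phi(\cR_1) \oplus A_2(\cM_2)$; comparing with the canonical splitting $\cM_2 = \cR_2 \oplus A_2(\cM_2)$ forces $\Phi(\cR_1) = \cR_2$, and since $\Phi$ is injective its restriction to $\cR_1$ is a Hilbert-space isomorphism onto $\cR_2$.

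For the converse, assuming $\Phi(\cR_1) = \cR_2$ I would first note that $\Phi(A_1^j(\cR_1)) = A_2^j(\cR_2)$ for every $j\geq 0$ via the extended intertwining, and then invoke the orthogonal-sum compatibility from the preliminary observation to conclude
\begin{equation}
\Phi(\cM_1) \;=\; \Phi\Bigl(\bigoplus_{j=0}^{\infty} A_1^j(\cR_1)\Bigr) \;=\; \bigoplus_{j=0}^{\infty} A_2^j(\cR_2) \;=\; \cM_2,
\end{equation}
so that $\Phi$ restricts to a bijection $\cM_1 \to \cM_2$.

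I do not anticipate any serious obstacle: the argument amounts to bookkeeping around the Wold-type decomposition of Lemma~\ref{lem:A-pure}. The only delicate point is the verification that an isometry sends closed orthogonal direct sums to closed orthogonal direct sums, and this follows at once from linearity, continuity, and preservation of inner products.
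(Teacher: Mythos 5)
Your proof is correct and follows essentially the same route as the paper's: both rest on the identity $\Phi\bigl(\bigoplus_{j\geq 0} A_1^j(\cR_1)\bigr)=\bigoplus_{j\geq 0} A_2^j\Phi(\cR_1)$ from Lemma~\ref{lem:A-pure} and the intertwining relation. The only cosmetic difference is in the forward direction, where the paper invokes the uniqueness of the wandering subspace while you compare the one-step splittings $\cM_2=\Phi(\cR_1)\oplus A_2(\cM_2)=\cR_2\oplus A_2(\cM_2)$ directly; these are the same fact, since that uniqueness is itself established via $\cR=\cM\ominus A(\cM)$.
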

	
	\begin{proof}
		Observe that 
		\begin{equation}\label{eq:Phi(M1)}
			\Phi(\cM_1)= \Phi(\bigoplus_{j=0}^\infty A_1^j(\cR_1)) = \bigoplus_{j=0}^\infty A_2^j\Phi(\cR_1).
		\end{equation} If $\Phi(\cM_1)=\cM_2$, by uniqueness of the wandering subspace for $A_2$ associated to $\cM_2$, we get that $\Phi(\cR_1)=\cR_2$. Conversely, if $\Phi(\cR_1)=\cR_2$, from \eqref{eq:Phi(M1)} we see that $\Phi(\cM_1)=\cM_2$.
	\end{proof}
	
	It can be easily seen that the unilateral shift $S$ is a pure isometry and moreover, the following lemma holds. The proof can be found in \cite[Lemma 4]{Ha61} (see also \cite[Lemma 3.24]{RR}).
	
	\begin{lemma}\label{lem:dimR_0}
		Let $\KK$ be a Hilbert space and let $\cM\subseteq H^{2}_{\KK}$ be a closed subspace which is invariant under $S$. Then, the wandering subspace $\mathcal R=\cM \ominus S\cM \subseteq H^{2}_{\KK}$ satisfies that
		$\dim(\cR) \leq \dim(\KK)$.
	\end{lemma}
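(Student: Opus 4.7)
The plan is to show that for every pair $f,g\in\cR$ the pointwise inner product $\varphi_{f,g}(z):=\langle f(z),g(z)\rangle_\KK$ is a.e.\ equal to the constant $\langle f,g\rangle_{H^2_\KK}$, and then to leverage separability to bound the cardinality of an orthonormal basis of $\cR$ by $\dim(\KK)$ simultaneously at a single point of $\TT$.

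For the first step, note that since $S$ is an isometry the wandering property upgrades to $S^j(\cR)\perp S^{j'}(\cR)$ for all $j\neq j'$ with $j,j'\geq 0$. For $f,g\in\cR$ and $j\geq 1$, expanding $\langle f, S^j g\rangle_{H^2_\KK}=0$ and $\langle S^j f,g\rangle_{H^2_\KK}=0$ gives
\begin{equation*}
\int_\TT \overline{z}^{\,j}\,\varphi_{f,g}(z)\,dz = 0 = \int_\TT z^{\,j}\,\varphi_{f,g}(z)\,dz, \qquad j\geq 1,
\end{equation*}
so all non-zero Fourier coefficients of the integrable function $\varphi_{f,g}$ vanish. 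Hence $\varphi_{f,g}$ coincides a.e.\ with its zeroth Fourier coefficient, which is $\int_\TT \varphi_{f,g}(z)\,dz = \langle f,g\rangle_{H^2_\KK}$.

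For the second step, use the separability of $H^2_\KK$ to fix a (countable) orthonormal basis $\{e_n\}_{n\in I}$ of $\cR$. By Step~1, for each pair $(n,m)\in I\times I$ there is a full measure set $E_{n,m}\subseteq\TT$ on which $\langle e_n(z),e_m(z)\rangle_\KK=\delta_{n,m}$. Intersecting these countably many sets yields a full measure set $E\subseteq\TT$ on which $\{e_n(z)\}_{n\in I}$ is an orthonormal family in $\KK$ for every $z\in E$. Picking any $z\in E$ yields $\#I\leq \dim(\KK)$, that is $\dim(\cR)\leq\dim(\KK)$.

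The main delicacy is not either step in isolation but their coupling: Step~1 produces a null exceptional set depending on the pair $(f,g)$, and one must exploit separability to reduce to a single null set so that the orthonormality of $\{e_n(z)\}_{n\in I}$ holds simultaneously at a common point $z\in\TT$.
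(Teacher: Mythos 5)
Your proof is correct and is essentially the classical argument of Halmos (\cite[Lemma 4]{Ha61}, see also \cite[Lemma 3.24]{RR}) to which the paper defers for this lemma: the wandering property forces the pointwise inner products $\langle f(z),g(z)\rangle_\KK$ to be a.e.\ constant, and separability lets you locate a single point where an orthonormal basis of $\cR$ becomes an orthonormal family in $\KK$. Both steps, including the handling of the pair-dependent null sets, are sound.
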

	
	\subsection{Reducing subspaces  for the bilateral shift of $L^2(\TT,\KK)$}\label{sec:reducing-subspaces}
	
	In this subsection, we will focus on the reducing subspaces for the bilateral shift $U$ of Definition \ref{def:U} acting on $L^2(\TT,\KK)$. These subspaces were completely characterized by Helson \cite{He64} in terms of \textit{range functions}. 
	
	A range function $J$ in $\KK$ is a mapping
	$$J:\TT \rightarrow \{ \text{closed subspaces of } \KK \}.$$
	The range function $J$ is measurable if $\la \mapsto \langle P_{J(\la)}x,y \rangle $ is a measurable complex-valued function for each $x,y \in\KK$, where $P_{J(\la)}$ denotes the orthogonal projection of $\KK$ onto $J(\la)$.

	The characterization theorem proved in \cite{He64} was later extended by Bownik and Ross in \cite[Theorem 2.4]{BR}, as we will state it. We remark that the subspaces of $L^2(\TT,\KK)$ which are reducing for $U$ are \textit{multiplication invariant} subspaces of $L^2(\TT,\KK)$ as defined in~\cite[Definition 2.3]{BR}, see also Definition \ref{def:mult-inv}. 
	
	\begin{theorem}\label{reducing_U}
		A closed subspace $\mathcal{M}\subseteq L^{2}(\TT,\KK)$ is reducing for $U$ if and only if there exists a measurable range function $J$ such that
		\begin{equation}\label{eq:charac-M-reducing}
			\mathcal{M}=\{f\in L^{2}(\TT,\KK) : f(\lambda)\in J(\lambda) \text{ a.e }\lambda\in \TT\}.
		\end{equation}
		Identifying range functions which are equal almost everywhere, the correspondence between reducing subspaces for $U$ and measurable range functions is one-to-one and onto.
		
		Moreover, if $\mathcal{M}$ is generated by iterations of $U$ on an (at most) countable set of functions ${\cA\subseteq L^{2}(\TT,\KK)}$, i.e. $\cM = \overline{\spn}\{U^{k}f :f\in\cA,\,k\in\Z\},$ then, the measurable range function associated to $\cM$ is given by
		$J(\lambda)= \overline{\spn}\{f(\lambda):f\in \cA\}$, for a.e. $\lambda\in\TT$.
	\end{theorem}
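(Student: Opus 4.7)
The plan is to prove the two implications separately, then the uniqueness claim, and finally the explicit formula for generated subspaces. The easy direction starts from a measurable range function $J$ and sets $\cM_J:=\{f\in L^{2}(\TT,\KK):f(\la)\in J(\la)\text{ a.e.}\}$. Closedness of $\cM_J$ follows by extracting an a.e.\ pointwise convergent subsequence of any $L^{2}$-Cauchy sequence in $\cM_J$ and using that each fiber $J(\la)$ is a closed subspace of $\KK$. Invariance under $U$ and $U^{*}$ is fiberwise immediate, since $(Uf)(\la)=\la f(\la)$ and $(U^{*}f)(\la)=\overline{\la}f(\la)$ preserve the fiber $J(\la)$ (here $|\la|=1$).

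For the converse, let $\cM$ reduce $U$. I would first bootstrap the invariance: being invariant under $U$ and $U^{*}$, $\cM$ is invariant under multiplication by every trigonometric polynomial in $\la,\bar\la$, and a standard approximation argument (Stone--Weierstrass followed by dominated convergence) upgrades this to invariance under multiplication by every $\phi\in L^{\infty}(\TT)$. Next, fix a countable dense subset $\{f_{n}\}_{n\in\N}\subseteq\cM$ and define
\begin{equation}
J(\la):=\overline{\spn}\{f_{n}(\la):n\in\N\},\qquad \la\in\TT.
\end{equation}
The main obstacle is showing that $J$ is measurable. I would produce a measurable pointwise orthonormal basis of $J(\la)$ by a Gram--Schmidt scheme applied to $\{f_{n}(\la)\}$, carefully tracking on a measurable partition of $\TT$ which index introduces a new dimension and which falls into the already-generated span. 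From such a basis $\{e_{k}(\la)\}_{k}$ with measurable $e_{k}$, the projection $P_{J(\la)}$ is a pointwise convergent sum of rank-one projections with measurable coefficients, so $\la\mapsto\langle P_{J(\la)}x,y\rangle_{\KK}$ is measurable for every $x,y\in\KK$.

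Once measurability of $J$ is in hand, the inclusion $\cM\subseteq\cM_{J}$ is automatic by construction. For the reverse inclusion, take $g\in\cM_{J}\ominus\cM$; for every $\phi\in L^{\infty}(\TT)$ and every $n$, we have $\phi f_{n}\in\cM$ by the bootstrapped invariance, hence
\begin{equation}
0=\langle g,\phi f_{n}\rangle=\int_{\TT}\phi(\la)\langle g(\la),f_{n}(\la)\rangle_{\KK}\,d\la.
\end{equation}
Varying $\phi$ forces $\langle g(\la),f_{n}(\la)\rangle_{\KK}=0$ a.e., and intersecting the null sets over $n$ yields $g(\la)\perp J(\la)$ a.e.; combined with $g(\la)\in J(\la)$, this gives $g=0$. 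Uniqueness of $J$ modulo null sets follows by testing the fiber projections against constant vectors $x\in\KK$ regarded as elements of $L^{2}(\TT,\KK)$, since two range functions yielding the same $\cM$ must give the same projection $\la\mapsto P_{J(\la)}$ almost everywhere. Finally, if $\cM=\overline{\spn}\{U^{k}f:f\in\cA,k\in\Z\}$, applying the construction to the countable generating family and using $(U^{k}f)(\la)=\la^{k}f(\la)$ with $|\la|=1$ gives $\overline{\spn}\{\la^{k}f(\la):f\in\cA,k\in\Z\}=\overline{\spn}\{f(\la):f\in\cA\}$, which is the claimed formula.
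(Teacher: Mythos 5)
The paper never proves this theorem: it is imported from Helson \cite{He64} in the form extended by Bownik and Ross \cite[Theorem 2.4]{BR}, so there is no internal argument to compare against. Your reconstruction is the classical proof and is essentially correct — the measurable Gram--Schmidt applied to a countable dense subset is exactly the right device for producing the range function, and the orthogonality argument for $\cM_J\subseteq\cM$ is the standard one. Three steps are stated more loosely than they should be, though none is a genuine gap. First, upgrading invariance under $U,U^*$ to invariance under multiplication by all of $L^\infty(\TT)$ requires a \emph{bounded} a.e.-convergent approximation by trigonometric polynomials (Fej\'er means of $\phi$, with $\|\sigma_N\phi\|_\infty\le\|\phi\|_\infty$) before dominated convergence applies; Stone--Weierstrass alone only covers continuous $\phi$. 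In fact you never need the full bootstrap: in the orthogonality step it suffices to take $\phi(\la)=\la^k$, $k\in\Z$, since the vanishing of all Fourier coefficients of the $L^1$ function $\la\mapsto\langle g(\la),f_n(\la)\rangle_{\KK}$ already forces it to vanish a.e. Second, the inclusion $\cM\subseteq\cM_J$ is not quite ``automatic by construction'': a general $g\in\cM$ is only an $L^2$-limit of elements of the dense set, so you must pass to an a.e.-convergent subsequence and use closedness of each fiber $J(\la)$ — the same device you invoke for closedness of $\cM_J$. Third, the uniqueness assertion needs the small lemma that the orthogonal projection onto $\cM_J$ acts fiberwise as $P_{J(\la)}$ (check that $f\mapsto P_{J(\cdot)}f(\cdot)$ is a self-adjoint idempotent with range $\cM_J$); only then does testing against countably many constant vectors and intersecting null sets give $J_1=J_2$ a.e. With these points filled in, your argument is complete and coincides with the proof in the cited sources.
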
 
	
	Since $L^2(\TT,\KK)$ is a separable Hilbert space, every closed subspace $\mathcal{M}\subseteq L^2(\TT,\KK)$ reducing for $U$ admits an at most countable \textit{set of generators} $\cA\subset L^2(\TT,\KK)$ in the above sense.
	
	Observe that for the particular case of $\KK=\CC$, since its closed subspaces  are $\CC$ or $\{0\}$, a range function $J$ in $\CC$ must be of the form $J(\la)=\cX_{E}(\la) \CC$ for a.e. $\la\in\TT$, where $E$ is a measurable set in $\TT$ and $\chi_E$ denotes its characteristic function.  Hence, Theorem \ref{reducing_U} implies that the reducing subspaces for $U$ of $L^2(\TT)$ are the ones that can be written as 
	$$\cM=\{ f\in L^{2}(\TT) : f=0 \text{ a.e. } \TT\setminus E \} = \cX_{E} L^{2}(\TT).$$ 
	
	Given that the reducing subspaces for $U$ of $L^2(\TT,\KK)$ can be characterized through the \textit{associated} range function, many properties of these spaces can be understood through the pointwise properties of the range function (as long as they are satisfied uniformly) as we see below. From now on, we will use the notation $J_{\cM}$ when the range function is associated to a reducing subspace $\cM$. 
	
	\begin{lemma}\label{lem:props-J}
		The following statements hold.
		\begin{enumerate}
			\item[\rm i)] Let $\cM$ be a reducing subspace for $U$ in $L^2(\TT,\KK)$ with range function $J_\cM$. Then,  the  range function of $\cM^\perp$ is given by $\la \mapsto (J_\cM(\la))^\perp$, for a.e. $\la\in\TT.$
			\item[\rm ii)] Let  $\{\cM_n\}_{n\in I}$ be an at most countable  sequence of pairwise orthogonal  reducing subspaces for $U$ in $L^2(\TT,\KK)$  with range functions $\{J_{\cM_n}\}_{n\in I}$. Then, $\bigoplus_{n\in I} \cM_n$ is reducing for $U$ and its range function is given by $\la\mapsto \bigoplus_{n\in I} J_{\cM_n}(\la)$, for a.e. $\la\in\TT$. 
		\end{enumerate}
	\end{lemma}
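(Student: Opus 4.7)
The plan is to prove part (ii) first and then deduce part (i) from it via the uniqueness clause of Theorem \ref{reducing_U}. For (ii), first note that $\cN := \bigoplus_{n\in I} \cM_n$ is a well-defined closed subspace (thanks to pairwise orthogonality) which is invariant under both $U$ and $U^*$, since each $\cM_n$ is; hence $\cN$ is reducing for $U$, so by Theorem \ref{reducing_U} it carries a measurable range function $J_\cN$. To compute it, I would pick for each $n\in I$ an at most countable set of generators $\cA_n \subseteq \cM_n$ in the sense of Theorem \ref{reducing_U} (which exists by separability of $L^2(\TT,\KK)$), so that $\cA := \bigcup_{n\in I} \cA_n$ is an at most countable set of generators of $\cN$. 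The last assertion of Theorem \ref{reducing_U} then gives, for a.e.\ $\lambda\in\TT$,
\begin{equation*}
J_\cN(\lambda) = \overline{\spn}\{f(\lambda) : f \in \cA\}, \qquad J_{\cM_n}(\lambda) = \overline{\spn}\{f(\lambda) : f \in \cA_n\}.
\end{equation*}

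The key step is to upgrade the global orthogonality $\cM_n \perp \cM_m$ (for $n\neq m$) to pointwise orthogonality of fibers. For $f \in \cA_n$ and $g \in \cA_m$ with $n\neq m$, the fact that $U^k f \in \cM_n \perp \cM_m \ni g$ for every $k \in \Z$ yields
\begin{equation*}
0 = \langle U^k f, g \rangle = \int_{\TT} \lambda^k \, \langle f(\lambda), g(\lambda)\rangle_\KK \, d\lambda \qquad \text{for all } k \in \Z,
\end{equation*}
so $\lambda \mapsto \langle f(\lambda), g(\lambda)\rangle_\KK$ vanishes a.e.\ on $\TT$. As the family of pairs $(f,g)$ with $f\in\cA_n$, $g\in\cA_m$, $n\neq m$, is countable, I can discard a single null set off which every such pair is fiberwise orthogonal; taking closed spans this gives $J_{\cM_n}(\lambda) \perp J_{\cM_m}(\lambda)$ for a.e.\ $\lambda$. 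Combined with the formulas above, this forces $J_\cN(\lambda) = \overline{\spn}\bigcup_{n\in I} J_{\cM_n}(\lambda) = \bigoplus_{n\in I} J_{\cM_n}(\lambda)$ a.e., as desired.

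For (i), observe that $\cM^\perp$ is reducing for $U$ (it is invariant under $U$ and $U^*$, since $\cM$ is), and $\cM \oplus \cM^\perp = L^2(\TT,\KK)$, whose range function is the constant function $\lambda\mapsto \KK$. Applying part (ii) to the pair $\{\cM,\cM^\perp\}$ together with the uniqueness of the range function in Theorem \ref{reducing_U}, one obtains $J_\cM(\lambda) \oplus J_{\cM^\perp}(\lambda) = \KK$ for a.e.\ $\lambda$, i.e.\ $J_{\cM^\perp}(\lambda) = J_\cM(\lambda)^\perp$ a.e. The main (and mild) obstacle is the fiberwise orthogonality step in (ii); it is handled by the Fourier coefficient identity above together with the at-most-countability of the generating set, which lets a single null exceptional set do the job for all pairs simultaneously.
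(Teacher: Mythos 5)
Your proof is correct. Note that the paper itself does not prove this lemma at all: it simply remarks that the statement ``is a consequence of the results in \cite{BR}'' and moves on, so your argument is a genuine, self-contained substitute for an external citation rather than a variant of an in-paper proof. Your route is the natural elementary one: for (ii) you combine the generator formula $J_\cN(\la)=\overline{\spn}\{f(\la):f\in\cA\}$ from Theorem \ref{reducing_U} with the observation that global orthogonality of $U$-orbits, $\langle U^k f,g\rangle=0$ for all $k\in\Z$, forces the $L^1(\TT)$ function $\la\mapsto\langle f(\la),g(\la)\rangle_\KK$ to have all Fourier coefficients zero, hence to vanish a.e.; countability of the generating family lets one exceptional null set serve all pairs, and the fiberwise orthogonal closed spans assemble into the orthogonal direct sum. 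For (i), applying (ii) to $\{\cM,\cM^\perp\}$ and matching against the constant range function $\la\mapsto\KK$ of $L^2(\TT,\KK)$ (via the uniqueness clause of Theorem \ref{reducing_U}) is clean and complete. The only steps left implicit --- that $\la\mapsto\langle f(\la),g(\la)\rangle_\KK$ is measurable and integrable, and that two orthogonal subspaces summing to $\KK$ must be mutual orthogonal complements --- are routine. What your approach buys is independence from the multiplication-invariant-space machinery of \cite{BR}, at the cost of a slightly longer argument; what the paper's citation buys is brevity, since \cite{BR} establishes these fiberization facts in the greater generality of $L^2(X,\mu;\KK)$ that the authors invoke again in Subsection \ref{MIS}.
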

	
	The proof of the above lemma is a  consequence of the results in \cite{BR}.

	Since the range function $J_{\cM}(\la)$ can be the subspace $\{0\}$ for some values of $\la\in\TT$, we define the {\it spectrum of $\cM$} as the measurable set
	\begin{equation}\label{eq:specturm}
		\sigma(\cM) = \left\{\la\in\TT : J_{\cM}(\la) \neq \{0\} \right\}.
	\end{equation}
	Even more, the dimension of  $J_\cM(\la)$ as a subspace of $\KK$ may vary with $\lambda\in\TT$. Thus, it is often convenient to analyze the subsets of $\TT$ where this dimension is constant. The following lemma shows that given a closed subspace $\cM \subseteq L^{2}(\TT,\KK)$ reducing for $U$ with measurable range function $J_{\cM}$, there exists a measurable partition of $\TT$ into sets where $\dim (J_{\cM}(\lambda))$ is constant and explicitly provides a \textit{measurable} orthonormal basis of $J_{\cM}(\lambda)$ which holds a.e. over each of these sets. This result is a consequence of \cite[Theorem 2.6]{BR} and the proof can be easily extended from \cite[Lemma 2.9]{ACCP1}.  
	
	\begin{lemma}\label{measurable_sets}
		Let $\cM \subseteq L^{2}(\TT,\KK)$ be a reducing subspace for $U$ with range function $J_{\cM}$. For each $n\in \N_0\cup\{\infty\}$ define the sets $A_n=\{\la\in\TT\,:\,\dim (J_{\cM}(\lambda))=n \}$. Then,  $\{A_n\}_n$ is a family of disjoint measurable sets such that $\TT = \bigcup_{n\in\N_{0}\cup\{\infty\}} A_{n}$ and there exist functions $\{\phi_{i}\}_{i\in\N} \subset L^{\infty}(\TT,\KK)$ for which the following statements hold:
		\begin{enumerate}
			\item[\rm i)] $\{U^{k}\phi_{i} : i\in\N, k\in\Z \}$ is a Parseval frame of  $\cM$.
			\item[\rm ii)] For every $n\in\N$ and $i>n$, $\phi_{i}(\la) = 0$ a.e. $\la \in A_{n}$.
			\item[\rm iii)] For every $n\in\N$, $\{ \phi_{1}(\la),...,\phi_{n}(\la)\}$ is an orthonormal basis of $J_{\cM}(\la)$ a.e. $\la\in A_{n}$, and $\{ \phi_{i}(\la)\}_{i\in\N}$ is an orthonormal basis of $J_{\cM}(\la)$ a.e. $\la\in A_{\infty}$.
		\end{enumerate}
	\end{lemma}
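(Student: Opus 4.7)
The plan is to build the family $\{\phi_i\}_{i\in\N}$ by a measurable Gram--Schmidt procedure applied to a countable generating set of $\cM$. First I would pick a countable set $\cA=\{g_n\}_{n\in\N}\subseteq L^{2}(\TT,\KK)$ that generates $\cM$ (available because $L^{2}(\TT,\KK)$ is separable). Theorem~\ref{reducing_U} then gives $J_{\cM}(\la) = \overline{\spn}\{g_n(\la):n\in\N\}$ for a.e.\ $\la\in\TT$, and the measurability of each $A_n$ follows because $\dim J_{\cM}(\la)$ equals the rank of the pointwise Gramian matrix $[\langle g_i(\la),g_j(\la)\rangle_\KK]_{i,j\in\N}$, which depends measurably on $\la$ and whose rank can be detected by countably many minors.

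Next I would construct $\phi_i$ inductively. Assume $\phi_1,\dots,\phi_{i-1}\in L^{\infty}(\TT,\KK)$ are already defined so that, for a.e.\ $\la$, the nonzero ones among $\phi_1(\la),\dots,\phi_{i-1}(\la)$ form an orthonormal basis of a subspace $W_{i-1}(\la)\subseteq J_{\cM}(\la)$. Let $j_i(\la)$ be the smallest index $j$ with $g_j(\la)\notin W_{i-1}(\la)$, and $j_i(\la)=\infty$ if no such $j$ exists. On the set $\{j_i=j\}$ define $\phi_i(\la)=v_j(\la)/\|v_j(\la)\|_\KK$ with $v_j(\la) = g_j(\la)-P_{W_{i-1}(\la)}g_j(\la)$, and $\phi_i(\la)=0$ on $\{j_i=\infty\}$. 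Since $\|\phi_i(\la)\|_\KK\in\{0,1\}$ by construction, one automatically gets $\phi_i\in L^{\infty}(\TT,\KK)$; and since $\phi_i(\la)\in J_\cM(\la)$, Theorem~\ref{reducing_U} yields $\phi_i\in\cM$.

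Then I would verify (ii) and (iii) directly from the greedy rule: on $A_n$ the procedure exhausts $J_\cM(\la)$ after exactly $n$ nonzero steps, so $\phi_i(\la)=0$ for $i>n$ and $\phi_1(\la),\dots,\phi_n(\la)$ is an orthonormal basis of $J_\cM(\la)$; on $A_\infty$ one gets an orthonormal sequence whose span is dense in $J_\cM(\la)$, which is therefore an orthonormal basis. For (i), since $\{\phi_i(\la)\}_{i\in\N}$ is a Parseval frame of $J_\cM(\la)$ for a.e.\ $\la$ (an orthonormal basis padded with zeros is still Parseval), the Parseval identity in $L^2(\TT)$ applied to each Fourier series $k\mapsto \langle f,U^k\phi_i\rangle$, summed over $i$, gives $\sum_{i,k}|\langle f,U^k\phi_i\rangle|^2 = \int_\TT \|f(\la)\|_\KK^2\,d\la = \|f\|^2$ for every $f\in\cM$, which is the Parseval frame property.

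The main obstacle is the measurability bookkeeping in the inductive step: I would need to check that $j_i$ is a measurable function, equivalently that for each fixed $j$ the set $\{\la : g_j(\la)\in W_{i-1}(\la)\}$ is measurable. This reduces to showing that the residual $\|g_j(\la)-P_{W_{i-1}(\la)}g_j(\la)\|_\KK^2$ is measurable in $\la$, which it is because it can be written as a polynomial expression in the scalar functions $\langle g_j(\la),\phi_k(\la)\rangle_\KK$ and $\|g_j(\la)\|_\KK^2$; maintaining this measurability across all countably many steps is the only delicate point, and every other assertion is a routine fiberization consequence of Theorem~\ref{reducing_U}.
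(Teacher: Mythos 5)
Your proposal is correct, and it is essentially the argument the paper relies on: the paper gives no proof of its own but defers to \cite[Theorem 2.6]{BR} and \cite[Lemma 2.9]{ACCP1}, whose content is exactly a measurable Gram--Schmidt orthonormalization of the fibers $J_\cM(\la)$ over the dimension strata $A_n$, together with the pointwise Parseval identity transferred to $\cM$ via Fourier series in $k$. The only points you flag as delicate (measurability of $j_i$ and of the residual norms) do go through, using that for weakly measurable $g,h$ the scalar function $\la\mapsto\langle g(\la),h(\la)\rangle_\KK$ is measurable by separability of $\KK$.
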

	
	\begin{remark}\label{rem:finite-An}
		Observe that if $\dim(J_\cM(\la))\leq k$ for a.e. $\la\in\TT$, then $|A_n|=0$ for every $n>k$ and thus $\phi_i\equiv 0$ for every $i>k$. Therefore, in this case, we can assume that the partition $\{A_n\}_n$ of $\TT$ and the functions $\{\phi_i\}_i$ are finitely many (as many as $k$ or less).
	\end{remark}
	
	\subsection{Invariant subspaces for the unilateral shift of $H^2_\KK$} Now, we turn to the invariant subspaces for the unilateral shift $S$ of Definition \ref{def:S} acting on $H^2_\KK$. We begin by considering the case of multiplicity $1$, i.e. taking $\KK=\CC$. In this case, the invariant subspaces for $S$ were characterized by Beurling \cite{Ber49}. A simplified version of the proof can be found in \cite[Corollary 3.11]{RR}. 
	
	\begin{definition}
		A function $h\in H^{2}$ is said to be {\it inner} if $|h(z)|=1$ a.e. $z\in\TT$.
	\end{definition}
	
	\begin{theorem}{\rm (Beurling's Theorem).}\label{invariant_subspace_Hardy}\label{thm:beurling}
		A closed subspace $\cM\subseteq H^{2}$ is invariant under $S$ if and only if there exists an inner function $h\in H^{2}$ such that $\cM=hH^{2}$.
		Furthermore, it holds that $h_{1}H^{2}=h_{2}H^{2}$ with $h_{1},h_{2}$ inner functions if and only if $h_{1}/h_{2}$ is a constant a.e. $\la\in\TT$.
	\end{theorem}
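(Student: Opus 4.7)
The easy implication is immediate: if $\cM = hH^2$ for some inner $h$, then $S\cM = zhH^2 \subseteq hH^2 = \cM$, so $\cM$ is invariant under $S$. The plan therefore concentrates on the converse and the uniqueness clause.

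For the converse, the key observation is that $S$ is a pure isometry on $H^2$, since $\bigcap_{j\geq 0} S^j(H^2) = \bigcap_{j\geq 0} z^j H^2 = \{0\}$ (an element of $z^j H^2$ has vanishing Fourier coefficients for all indices smaller than $j$). Assuming $\cM \neq \{0\}$, Lemma~\ref{lem:A-pure} yields the decomposition $\cM = \bigoplus_{j=0}^{\infty} S^j(\cR)$ with $\cR = \cM \ominus S\cM$, while Lemma~\ref{lem:dimR_0} specialized to $\KK = \CC$ forces $\dim\cR \leq 1$. Nontriviality of $\cM$ then gives $\dim\cR = 1$, so $\cR = \spn\{h\}$ for some unit vector $h \in H^2$.

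The crux of the proof -- and the step I expect to be the main obstacle -- is to verify that this candidate $h$ is inner. Using $\cR \perp S^j\cR$ for $j \geq 1$ together with the explicit description of $S$ as multiplication by $z$, I would compute
\begin{equation}
0 = \langle h, S^j h\rangle = \int_{\TT} |h(z)|^2 z^{-j}\, dz, \qquad j \geq 1.
\end{equation}
Since $|h|^2$ is real-valued, its Fourier coefficients at negative indices vanish as well, so $|h|^2$ is a.e.\ equal to its mean $\|h\|_{H^2}^2 = 1$; that is, $h$ is inner. Once this is established, multiplication by $h$ is an isometry on $L^2(\TT)$ that maps the orthonormal basis $\{z^j\}_{j\in\N_0}$ of $H^2$ onto the orthonormal basis $\{z^j h\}_{j\in\N_0}$ of $\bigoplus_{j\geq 0} S^j\cR = \cM$, yielding $\cM = hH^2$.

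For the uniqueness assertion, one direction is immediate: if $h_1/h_2$ equals a constant $c$ a.e.\ then $|c|=1$ (since both factors are inner) and $h_1 H^2 = c\,h_2 H^2 = h_2 H^2$. Conversely, if $h_1 H^2 = h_2 H^2$ with both inner, I would write $h_1 = h_2\phi$ and $h_2 = h_1\psi$ for some $\phi,\psi \in H^2$; since $|h_2|=1$ a.e.\ this gives $\phi = h_1\overline{h_2}$ a.e.\ with $|\phi|=1$, and analogously for $\psi$. The identity $\overline{\phi} = 1/\phi = \psi$ a.e.\ then forces $\overline{\phi} \in H^2$; combined with $\phi \in H^2$, all nonzero-index Fourier coefficients of $\phi$ vanish, and so $\phi = h_1/h_2$ is constant a.e.
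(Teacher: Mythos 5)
Your proof is correct, and it is essentially the standard wandering-subspace argument that the paper itself defers to (it gives no proof of its own, citing Beurling and \cite[Corollary 3.11]{RR}); moreover, you build it directly on the paper's Lemmas \ref{lem:A-pure} and \ref{lem:dimR_0}, so it is fully consistent with the machinery developed here. All steps — the innerness computation from $\cR \perp S^j\cR$, the isometry $f \mapsto hf$ identifying $\cM$ with $hH^2$, and the Fourier-coefficient argument for uniqueness — are sound.
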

	
	An extension of Beurling's theorem (Theorem \ref{thm:beurling}) to multiplicities greater than $1$ was provided by Lax \cite{Lax59}, for the case of finite multiplicity, and by Halmos \cite{Ha61} in the general case. We will refer to this extension as Beurling-Lax-Halmos Theorem. The latter shows that the closed subspaces of $H^2_\KK$ which are invariant under $S$ can be characterized through the theory of \textit{operator-valued functions}. 
	
	Before stating it, let us give some definitions and preliminary results. An operator-valued function in $\KK$ is a function $F:\TT\rightarrow \cB(\KK)$.
	The function $F$ is said to be measurable if for every $x\in\KK$, the $\KK$-valued function $\lambda\mapsto F(\lambda)x$ is measurable. Note that the measurablity of a range function $J$ in $\KK$ is equivalent to the measurability of the operator-valued function in $\KK$ given by $\la\mapsto P_{J(\la)}.$
	
	For an  operator-valued function in $\KK$, $F:\TT\rightarrow \cB(\KK)$,  its norm is defined as  $\|F\|_{\infty} = \esssup_{\lambda\in\TT} \|F(\lambda)\|_{op}.$ 
	
	\begin{definition}
		Let $F:\TT\to\cB(\KK)$ be a measurable operator-valued function in $\KK$ such that $\|F\|_{\infty}<\infty$. We denote by $\widehat{F}:L^{2}(\TT,\KK)\rightarrow L^{2}(\TT,\KK)$ the operator given by
		\begin{equation}
			(\widehat{F}f)(\lambda) = F(\lambda)f(\lambda),\quad \text{a.e. }\lambda\in\TT,\, f\in L^{2}(\TT,\KK),
		\end{equation}
		As $\|F\|_{\infty}<\infty$, we have that $\widehat{F}$ is well-defined and bounded.
	\end{definition}

	\begin{remark}\label{rem:S-sombrero}
		The operator $\widehat{S}:L^2(\TT,H^2_\KK)\to L^2(\TT,H^2_\KK)$ in Definition \ref{def:S-sombrero} is, in fact, the operator associated to the constant operator-valued function $\la\mapsto S$, where $S:H^2_\KK\to H^2_\KK$ is the unilateral shift operator on $H^2_\KK$.
	\end{remark}
	
	Let us denote by $\cF$ the class of all measurable functions $F:\TT\rightarrow \cB(\KK)$ such that $\|F\|_{\infty}<\infty$ and $\widehat{\cF}=\{ \widehat{F} : F \in \cF \}$.
	
	In \cite[Theorem 3.17]{RR} it is shown that the mapping $\cF\rightarrow \widehat{\cF}$ defined by $F\mapsto\widehat{F}$ is an adjoint-preserving algebra isomorphism. In particular, $\widehat{F}$ is normal (self-adjoint, unitary or a projection), if and only if $F(\lambda)$ is normal (self-adjoint, unitary or a projection) for a.e. $\lambda\in \TT$.
	
	An operator-valued function $F \in \cF$ is \textit{analytic} if $\widehat{F}(H^{2}_{\KK}) \subseteq H^{2}_{\KK}$.  We will denote by $\mathcal{F}_{0}$ the set of all analytic elements of $\mathcal{F}$.
	
	Beurling-Lax-Halmos Theorem shows that any closed subspace of $H^2_\KK$ that is invariant under $S$ is isometrically isomorphic to a space of the form $H^2_{\KK_1}$ where $\KK_1\subseteq \KK$ is a closed subspace, through an operator that is associated to an analytic operator-valued function. We refer the reader to \cite[Corollary 3.26]{RR} for a proof.
	
	\begin{theorem}{\rm (Beurling-Lax-Halmos Theorem)}\label{thm:beu-lax-hal}
		A closed subspace $\cM\subseteq H^{2}_{\KK}$ is invariant under $S$ if and only if there exists a subspace $\KK_{1} \subseteq \KK$ and an operator-valued function $F \in \mathcal{F}_{0}$  such that $$\cM=\widehat{F}(H^{2}_{\KK_{1}}),$$ where  $F(z)$ is a partial isometry with initial space $\KK_{1}$ for a.e. $z\in \TT$.
	\end{theorem}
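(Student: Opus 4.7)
The plan is to prove the two implications separately. For ii) $\Rightarrow$ i): if $\cM = \widehat{F}(H^2_{\KK_1})$ with $F \in \mathcal{F}_0$ and $F(z)$ a partial isometry with initial space $\KK_1$ almost everywhere, then $\widehat{F}$ restricts to an operator on $H^2_\KK$ which commutes with $S$ (both act by multiplication by the scalar $z$), so $S(\cM) = \widehat{F}(S\, H^2_{\KK_1}) \subseteq \widehat{F}(H^2_{\KK_1}) = \cM$; closedness follows because $\widehat{F}|_{H^2_{\KK_1}}$ is an isometry.

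For the nontrivial direction i) $\Rightarrow$ ii), the plan is to construct $F$ explicitly from a basis of the wandering subspace $\cR = \cM \ominus S(\cM)$. Since $S$ is a pure isometry on $H^2_\KK$ (because $\bigcap_{j \geq 0} S^j(H^2_\KK) = \{0\}$), Lemma \ref{lem:A-pure} yields $\cM = \bigoplus_{j=0}^\infty S^j(\cR)$, and Lemma \ref{lem:dimR_0} gives $\dim(\cR) \leq \dim(\KK)$. I would then fix a subspace $\KK_1 \subseteq \KK$ with $\dim(\KK_1) = \dim(\cR)$, together with orthonormal bases $\{e_i\}_{i\in I}$ of $\KK_1$ and $\{v_i\}_{i\in I}$ of $\cR$.

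The crucial step is to show that $\{v_i(z)\}_{i\in I}$ is orthonormal in $\KK$ for almost every $z\in\TT$. Translating the orthonormality of $\{S^j v_i : i\in I,\, j \geq 0\}$ in $H^2_\KK$ through the integral form of the inner product gives
\begin{equation}
\int_\TT z^{j-j'} \langle v_i(z), v_{i'}(z) \rangle_\KK \, dz = \delta_{j,j'}\,\delta_{i,i'} \qquad \text{for all } j,j' \geq 0.
\end{equation}
Since $j-j'$ sweeps all of $\Z$, every Fourier coefficient of the integrable function $z \mapsto \langle v_i(z), v_{i'}(z) \rangle_\KK$ vanishes except the zeroth one, which equals $\delta_{i,i'}$; by Fourier uniqueness, this function equals $\delta_{i,i'}$ almost everywhere. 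I would then define $F:\TT \to \cB(\KK)$ by $F(z) e_i := v_i(z)$, extended linearly on $\KK_1$, with $F(z) = 0$ on $\KK_1^\perp$. Measurability of $F$ follows from that of the $v_i$, and the pointwise orthonormality makes $F(z)$ a partial isometry with initial space $\KK_1$ a.e., so $\|F\|_\infty \leq 1$. Because $\widehat{F}(S^j e_i) = z^j v_i = S^j v_i \in H^2_\KK$ and $F$ annihilates $\KK_1^\perp$, the orthogonal decomposition $H^2_\KK = H^2_{\KK_1} \oplus H^2_{\KK_1^\perp}$ together with boundedness of $\widehat F$ yields $\widehat{F}(H^2_\KK) \subseteq H^2_\KK$, so $F \in \mathcal{F}_0$; and the same identity gives $\widehat{F}(H^2_{\KK_1}) = \bigoplus_{j\geq 0} S^j(\cR) = \cM$.

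The main technical obstacle is the pointwise orthonormality claim, which hinges on the specific integral form of the inner product on $H^2_\KK$ combined with Fourier uniqueness. Once this is in place, the remaining verifications (measurability, boundedness, analyticity of $F$, and the identification of the image with $\cM$) are routine bookkeeping.
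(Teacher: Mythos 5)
Your proof is correct, and it is essentially the classical Halmos argument (wandering subspace $\cR=\cM\ominus S\cM$, pointwise orthonormality of the boundary values of an orthonormal basis of $\cR$ via vanishing of all nonzero Fourier coefficients, then assembling the partial isometry), which is exactly the proof in the reference \cite[Corollary 3.26]{RR} that the paper cites in lieu of giving its own. The only point worth making explicit is that the a.e.\ statements for the countably many pairs $(i,i')$ must be intersected into a single full-measure set, which is harmless since $\KK$ is separable.
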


	In the rest of this section, we state some results regarding operator-valued functions that we will need later.
	The following proposition shows that every operator that commutes with $U$ or $S$ must belong to the class $\widehat{\cF}$.
	
	\begin{proposition}\cite[Corollary 3.19 and 3.20]{RR}\label{commutant_U_S}\
		Given a Hilbert space $\KK$, let $U$ and $S$ be the bilateral and unilateral shifts acting on $ L^{2}(\TT,\KK)$  and $H^{2}_{\KK}$ respectively. Then, the following conditions hold:
		\begin{enumerate} 
			\item[\rm i)] The commutant of $U$ is $\widehat{\cF}$.
			\item[\rm ii)] The commutant of $S$ is $\{ \widehat{F}|_{H^{2}_{\KK}} : F\in \cF_{0}\}$.
		\end{enumerate}
	\end{proposition}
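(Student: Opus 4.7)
The plan is to prove (i) first, exploiting that commutation with the unitary operator $U$ upgrades to commutation with the whole $L^{\infty}(\TT)$-module structure on $L^{2}(\TT,\KK)$. Part (ii) will then follow by lifting $T$ from $H^{2}_{\KK}$ to an operator on $L^{2}(\TT,\KK)$ that commutes with $U$, and invoking (i).

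For (i), the inclusion $\widehat{\cF}\subseteq\{T:TU=UT\}$ is immediate, since pointwise scalar multiplication by $\lambda$ commutes with pointwise left multiplication by $F(\lambda)$. For the reverse inclusion, I would first note that $U$ is unitary with spectrum $\TT$, so by the spectral theorem the weak-operator closed $*$-algebra it generates coincides with $\{M_{\phi}:\phi\in L^{\infty}(\TT)\}$, acting by pointwise scalar multiplication on vector-valued functions. Since $TU=UT$ forces $TU^{*}=U^{*}T$ by unitarity, $T$ commutes with every $M_{\phi}$, i.e.\ $T$ is an $L^{\infty}(\TT)$-module map. Given $x\in\KK$, viewed as the constant $\KK$-valued function, set $F(\lambda)x:=(Tx)(\lambda)$. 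The localization estimate
\begin{equation}
\int_{E}\|F(\lambda)x\|_{\KK}^{2}\,d\lambda=\|T(\chi_{E}x)\|^{2}\leq \|T\|^{2}\,|E|\,\|x\|_{\KK}^{2},
\end{equation}
valid for every measurable $E\subseteq\TT$, combined with Lebesgue differentiation applied on a countable dense set $\{x_{n}\}\subseteq\KK$, will produce a single null set outside of which $F(\lambda)$ extends to a bounded operator of norm at most $\|T\|$. Measurability of $F$ in the sense required by the class $\cF$ then follows from the fact that $\lambda\mapsto\langle F(\lambda)x,y\rangle_{\KK}=\langle(Tx)(\lambda),y\rangle_{\KK}$ is measurable for every $x,y\in\KK$. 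Finally, $T=\widehat{F}$ is verified on the dense span of functions of the form $\chi_{E}x$ using the module identity $T(\chi_{E}x)=\chi_{E}(Tx)$, and then extended by continuity.

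For (ii), the inclusion $\{\widehat{F}|_{H^{2}_{\KK}}:F\in\cF_{0}\}\subseteq\{T:TS=ST\}$ is direct, using that $F\in\cF_{0}$ means $\widehat{F}$ preserves $H^{2}_{\KK}$ and $\widehat{F}$ commutes with $U$ on $L^{2}(\TT,\KK)$. For the reverse, I would extend $T:H^{2}_{\KK}\to H^{2}_{\KK}$ to an operator $\tilde{T}$ on $L^{2}(\TT,\KK)$ commuting with $U$. The subspaces $U^{-k}H^{2}_{\KK}$, $k\geq 0$, form an ascending chain whose union is dense in $L^{2}(\TT,\KK)$ (one gains Fourier coefficients down to arbitrarily negative index). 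Since $TS=ST$, the formula $\tilde{T}(U^{-k}g):=U^{-k}Tg$ for $g\in H^{2}_{\KK}$ is consistent across the chain, contractive, and intertwines $U$ on this dense subspace. Extending by continuity yields $\tilde{T}\in\cB(L^{2}(\TT,\KK))$ with $\tilde{T}U=U\tilde{T}$, so by (i) there is $F\in\cF$ with $\tilde{T}=\widehat{F}$. Because $\widehat{F}(H^{2}_{\KK})=T(H^{2}_{\KK})\subseteq H^{2}_{\KK}$, one concludes $F\in\cF_{0}$ and $T=\widehat{F}|_{H^{2}_{\KK}}$.

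The main obstacle I anticipate lies in part (i): the pointwise definition $F(\lambda)x=(Tx)(\lambda)$ a priori depends on a null set that varies with $x$, so producing a single full-measure set on which all the $F(\lambda)$'s are simultaneously well-defined bounded operators of uniformly controlled norm is exactly what the countable-dense and Lebesgue-differentiation argument above is designed to resolve. Once (i) is in hand, (ii) reduces to checking that the naive lifting to the chain $\{U^{-k}H^{2}_{\KK}\}_{k\geq 0}$ is well defined, contractive, and intertwines $U$, which is essentially bookkeeping.
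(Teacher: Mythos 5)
The paper offers no internal proof of this proposition --- it is imported verbatim from \cite[Corollaries 3.19 and 3.20]{RR} --- so the only comparison available is with the cited source, and your argument is a correct, self-contained reconstruction of the standard proof given there. In (i), the chain of steps is sound: $TU=UT$ with $U$ unitary gives $TU^*=U^*T$, hence commutation with all trigonometric-polynomial multipliers and, by passing to the strong limit (Fej\'er means, say), with every $M_\phi$, $\phi\in L^\infty(\TT)$; the localization estimate over all measurable $E$ yields $\|F(\la)x\|_\KK\le\|T\|\,\|x\|_\KK$ off a null set depending on $x$, and running this together with $(\mathbb{Q}+i\mathbb{Q})$-linearity over a countable dense subset of $\KK$ produces a single null set off which $F(\la)$ is a bounded operator with $\|F(\la)\|_{op}\le\|T\|$; measurability and the identification $T=\widehat{F}$ on simple functions then close the argument. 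One small simplification: the pointwise bound follows directly from the integral inequality by taking $E$ to be the putative exceptional set, so Lebesgue differentiation is not actually needed. In (ii), the consistency of $\tilde{T}(U^{-k}g):=U^{-k}Tg$ across the nested chain $U^{-k}H^{2}_{\KK}$ is precisely where $TS=ST$ is used, the union of the chain is dense because it contains the full orthonormal basis $\{U^m\var_i\}$, and the resulting extension satisfies $\|\tilde{T}\|\le\|T\|$ (it is norm-controlled rather than literally ``contractive'' unless $\|T\|\le 1$); with that wording adjusted, the reduction to (i) and the conclusion $F\in\cF_0$ are correct.
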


	\begin{lemma}\label{lem:props-range-op}
		Let $F:\TT\to \cB(\KK)$ be an operator-valued function in $\cF$, let $A=\widehat{F}$, and let $\cM\subseteq L^2(\TT,\KK)$ be a reducing subspace for $U$ with range function $J_\cM$. Then the following statements hold:
		\begin{enumerate}
			\item[\rm i)] The subspace $\overline{A(\cM)}\subseteq L^2(\TT,\KK)$ is reducing for $U$ and its range function is given by $\la\mapsto \overline{F(\la)(J_\cM(\la))},$ for a.e. $\la\in\TT$.
			\item[\rm ii)] The subspace $\ker(A)\subseteq L^2(\TT,\KK)$ is reducing for $U$ and its range function is given by $\la\mapsto\ker(F(\la))$, for a.e. $\la\in\TT$.
			\item[\rm iii)] $A$ is a partial isometry with initial space $\cM$ if and only if $F(\la)$ is a partial isometry with initial space $J_\cM(\la)$ for a.e. $\la\in\TT$.
			\item[\rm iv)] The subspace $\cR=\cM \ominus \overline{A(\cM)}$ is also reducing for $U$ and its range function is given by $\la\mapsto J_\cM(\la)\ominus F(J_\cM(\la))$, for a.e. $\la\in\TT$.
		\end{enumerate}
	\end{lemma}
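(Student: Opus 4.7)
My approach is to dispatch all four items in sequence by combining two ingredients: Proposition \ref{commutant_U_S}(i), which yields that every operator $\widehat{F}$ with $F\in\cF$ commutes with $U$ (and hence with $U^{*}$), and the Helson correspondence of Theorem \ref{reducing_U}, where the associated range function is determined either by a countable generating set or by its uniqueness property. In each case I would first argue that the candidate subspace is reducing for $U$ and then read off its range function.

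For (i), the commutation relations immediately yield that $A(\cM)$, and hence $\overline{A(\cM)}$, is both $U$- and $U^{*}$-invariant, so it reduces $U$. Picking a countable $\cA\subseteq \cM$ with $\cM=\overline{\spn}\{U^{k}f:f\in\cA,\,k\in\Z\}$ and using $AU^{k}=U^{k}A$, I obtain $\overline{A(\cM)}=\overline{\spn}\{U^{k}(Af):f\in\cA,\,k\in\Z\}$; Theorem \ref{reducing_U} then delivers the range function $\la\mapsto\overline{\spn}\{F(\la)f(\la):f\in\cA\}=\overline{F(\la)(J_{\cM}(\la))}$, where the last equality uses boundedness of $F(\la)$. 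For (ii), I would exploit $\ker(A)=(\overline{A^{*}(L^{2}(\TT,\KK))})^{\perp}$: since the map $F\mapsto\widehat{F}$ is adjoint-preserving, $A^{*}=\widehat{F^{*}}$, so applying (i) to the whole space $L^{2}(\TT,\KK)$ (range function $\la\mapsto\KK$) together with Lemma \ref{lem:props-J}(i) produces the range function $\la\mapsto\overline{F(\la)^{*}\KK}^{\perp}=\ker(F(\la))$ of $\ker(A)$.

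For (iii), the algebra-isomorphism property of $F\mapsto\widehat{F}$ identifies $A^{*}A$ with $\widehat{F^{*}F}$, while $P_{\cM}$ is the multiplier corresponding to $\la\mapsto P_{J_{\cM}(\la)}$ (because $\cM$ is exactly the space of $f$ with $f(\la)\in J_{\cM}(\la)$ a.e.). The characterization $A^{*}A=P_{\cM}$ of partial isometries with initial space $\cM$ therefore transfers, via the injectivity of $F\mapsto\widehat{F}$, to $F(\la)^{*}F(\la)=P_{J_{\cM}(\la)}$ a.e., which is precisely the pointwise statement. For (iv), both $\cM$ and $(\overline{A(\cM)})^{\perp}$ reduce $U$ (by (i) and Lemma \ref{lem:props-J}(i)), so their intersection $\cR$ does too; unwinding membership, $f\in\cR$ iff $f(\la)\in J_{\cM}(\la)\cap\overline{F(\la)(J_{\cM}(\la))}^{\perp}=J_{\cM}(\la)\ominus\overline{F(\la)(J_{\cM}(\la))}$ a.e., and the uniqueness in Theorem \ref{reducing_U} yields the claimed range function. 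The only genuine concern across all four parts is measurability of the pointwise expressions, which I sidestep by first establishing that the relevant subspace is reducing for $U$ and then invoking Theorem \ref{reducing_U} to produce the automatically measurable range function.
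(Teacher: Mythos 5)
Your proof is correct, but it takes a genuinely different route from the paper for the simple reason that the paper does not prove this lemma at all: items i)--iii) are delegated to \cite[Theorem 4.1 and Lemma 4.2]{BI}, phrased there in the language of multiplication-invariant operators and range operators, and item iv) is declared an easy consequence of i) and Lemma \ref{lem:props-J}. Your argument is self-contained and uses only tools already present in the paper: the generating-set description of range functions in Theorem \ref{reducing_U} together with $AU^k=U^kA$ for i); the duality $\ker(A)=(\overline{A^*(L^2(\TT,\KK))})^\perp$ and the adjoint-preserving isomorphism $F\mapsto\widehat{F}$ for ii); and the characterization $A^*A=P_{\cM}$ combined with $P_{\cM}=\widehat{G}$ for $G(\la)=P_{J_\cM(\la)}$, plus injectivity of $F\mapsto\widehat F$ (modulo a.e.\ equality), for iii). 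What your route buys is independence from the external reference; the parenthetical identification $P_{\cM}=\widehat G$ in iii) deserves a sentence of its own, but it is exactly the argument of Lemma \ref{lem:range-of-full-hardy} i) transplanted to a general reducing subspace, so it is not a gap.

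The one place where you are too quick is iv). Unwinding membership does show $\cR=\{f: f(\la)\in K(\la)\text{ a.e.}\}$ with $K(\la)=J_\cM(\la)\cap\bigl(\overline{F(\la)(J_\cM(\la))}\bigr)^\perp$, but the uniqueness clause of Theorem \ref{reducing_U} identifies two \emph{measurable} range functions defining the same reducing subspace, and you have not checked that $K$ is measurable; until you do, you cannot conclude $J_\cR=K$ a.e. This is readily patched: either note that $P_{K_1(\la)\cap K_2(\la)}$ is the strong limit of $\bigl(P_{K_1(\la)}P_{K_2(\la)}\bigr)^n$, so pointwise intersections of measurable range functions are measurable; or dualize, writing $\cR^\perp=\overline{\cM^\perp+\overline{A(\cM)}}$, compute the range function of a closed sum of reducing subspaces from the union of their generating sets, and apply Lemma \ref{lem:props-J} i). (In the paper's actual use of iv) one has $\overline{A(\cM)}\subseteq\cM$, so $\cM=\overline{A(\cM)}\oplus\cR$ and Lemma \ref{lem:props-J} ii) gives the conclusion at once, which is presumably what the authors mean by ``easily deduced.'')
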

	
	The proofs of items i)-iii) of the lemma above can be found in \cite[Theorem 4.1 and Lemma 4.2]{BI} in the context of \textit{multiplication invariant operators} and \textit{range operators} as defined in  Definition 3.2 and Definition 3.6 in \cite{BI}, see also Subsection \ref{MIS}. Item iv) can be easily deduced from i) and Lemma \ref{lem:props-J}.

	\section{Invariant subspaces of $L^2(\TT, H^2_\KK)$}\label{our-theorems}
	
	In this section we will prove our main result, i.e.
	the characterization of the closed subspaces of $L^2(\TT,H^2_\KK)$ which are reducing for the bilateral shift $U:L^2(\TT,H^2_\KK)\to L^2(\TT,H^2_\KK)$ and invariant under $\widehat{S}:L^2(\TT,H^2_\KK)\to L^2(\TT,H^2_\KK)$ (see Definition \ref{def:U} and Definition \ref{def:S-sombrero}, respectively). 
	
	By Theorem \ref{reducing_U}, we know that given a closed subspace $\cM\subseteq L^2(\TT,H^2_\KK)$ reducing for $U$, there exists a measurable range function $J_\cM$ in $H^2_\KK$ associated to $\cM$. The invariance of $\cM$ by $\widehat S$  is equivalent to the invariance for the unilateral shift $S$ on $H^2_\KK$ of $J_\cM(\la)$ for a.e. $\la\in\TT$ as we show next.

	\begin{lemma}\label{lem:invariance_J_S}
		Let $\cM \subseteq L^{2}(\TT,H^{2}_{\KK})$ be a reducing subspace for $U$ with range function $J_\cM$ in $H^2_\KK$. Then, $\cM$ is invariant under $\widehat{S}$ if and only if $J_\cM(\la)$ is invariant under $S$ a.e. $\la \in \TT$.
		
	\end{lemma}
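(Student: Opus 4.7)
The plan is to translate the statement into a pointwise condition by invoking the characterization of reducing subspaces for $U$ via range functions (Theorem \ref{reducing_U}), together with the fact that $\widehat{S}$ acts fiberwise on $L^2(\TT,H^2_\KK)$. Since a reducing subspace $\cM$ is uniquely determined by its range function $J_\cM$ up to a null set, the $\widehat{S}$-invariance of $\cM$ should be equivalent to an a.e. $S$-invariance of the fibers $J_\cM(\la)$.

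For the implication from range function invariance to subspace invariance, I would argue directly: for any $f\in\cM$, the characterization \eqref{eq:charac-M-reducing} yields $f(\la)\in J_\cM(\la)$ for a.e. $\la\in\TT$. Then by Definition \ref{def:S-sombrero} we have $(\widehat{S}f)(\la)=S(f(\la))\in S(J_\cM(\la))\subseteq J_\cM(\la)$ a.e., so applying \eqref{eq:charac-M-reducing} again yields $\widehat{S}f\in\cM$. No null-set bookkeeping is needed here.

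For the converse, care must be taken because the a.e. condition $f(\la)\in J_\cM(\la)$ holds off a null set that depends on $f$, and there are uncountably many $f\in\cM$. The plan is to fix an at most countable set of generators $\cA\subseteq L^2(\TT,H^2_\KK)$ for $\cM$, whose existence is guaranteed by separability (see the remark after Theorem \ref{reducing_U}). By the final assertion of Theorem \ref{reducing_U}, $J_\cM(\la)=\overline{\spn}\{f(\la):f\in\cA\}$ for a.e. $\la$. The $\widehat{S}$-invariance of $\cM$ gives $\widehat{S}f\in\cM$ for each $f\in\cA$, so $Sf(\la)=(\widehat{S}f)(\la)\in J_\cM(\la)$ off a null set $N_f$. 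Taking the countable union $N=\bigcup_{f\in\cA}N_f$, for every $\la\notin N$ we have $Sf(\la)\in J_\cM(\la)$ for all $f\in\cA$. By linearity and continuity of $S$, it follows that $S(\overline{\spn}\{f(\la):f\in\cA\})\subseteq J_\cM(\la)$, i.e., $S(J_\cM(\la))\subseteq J_\cM(\la)$ off $N$.

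The main obstacle is precisely this null-set management in the converse direction; the countable generating-set argument resolves it cleanly. As an alternative one could invoke Lemma \ref{lem:props-range-op}\,i) with $\KK$ replaced by $H^2_\KK$ and with the constant operator-valued function $\la\mapsto S$ (which represents $\widehat{S}$ by Remark \ref{rem:S-sombrero}) to identify the range function of $\widehat{S}(\cM)$ with $\la\mapsto S(J_\cM(\la))$, and then compare range functions via the uniqueness part of Theorem \ref{reducing_U}; however the direct generating-set approach is self-contained and slightly shorter.
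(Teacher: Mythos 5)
Your proposal is correct and follows essentially the same route as the paper: the easy direction is handled by evaluating $\widehat{S}f$ fiberwise against the characterization \eqref{eq:charac-M-reducing}, and the direction requiring care is resolved exactly as in the paper by passing to a countable generating set $\cA$, using $J_\cM(\la)=\overline{\spn}\{f(\la):f\in\cA\}$ from Theorem \ref{reducing_U}, taking a countable union of null sets, and invoking the linearity and continuity of $S$. Your explicit null-set bookkeeping is a slightly more careful write-up of what the paper leaves implicit, but it is the same argument.
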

	
	\begin{proof}
		Let $\cA\subset L^2(\TT,H^2_\KK)$ be an at most countable set of generators for $\cM$, that is, $\cM = \overline{\spn}\{U^{k}f :f\in\cA,k\in\Z\}.$ Theorem \ref{reducing_U} states that for a.e. $\la\in\TT$ $$J_\cM(\la)=\overline{\spn}\{f(\la)\,:\,f\in\cA\}.$$
		If $\cM$ is invariant under $\widehat{S}$, then for every $f\in\cA$ we have that $\widehat{S}f\in\cM$. Then $(\widehat{S}f)(\la) = S(f(\la))\in J_\cM(\la)$ for a.e. $\la\in\TT$. Finally, by linearity and continuity of $S$,
		$$S(J_\cM(\lambda)) 
		\subseteq \overline{\spn}\{S(f(\lambda)):f\in\cA\}\subseteq J_\cM(\lambda)$$
		for a.e. $\la\in\TT$.
		The converse is clear.
	\end{proof}
	
	Having in mind Lemma \ref{lem:invariance_J_S}, it appears that we may derive a description of the closed subspaces $\cM\subseteq L^2(\TT,H^2_\KK)$ which are reducing for $U$ and invariant under $\widehat{S}$ by an application of the Beurling-Lax-Halmos Theorem (Theorem \ref{thm:beu-lax-hal}) to each subspace $J_\cM(\la)\subseteq H^2_\KK$ for a.e. $\la\in\TT$. However, this approach is not as simple since, for a.e. $\la\in\TT$, one would have a different operator-valued function $F_\la:\TT\to\cB(\KK)$ and a different initial space $\KK_\la\subseteq\KK$. A characterization for $\cM$, requires the {\it measurability } of the functions $\la \rightarrow F_\la $ and $\la\rightarrow \KK_{\la}$ plus certain \textit{uniformity}. In order to fulfill this requirements our strategy is to construct these functions step by step using the properties of $U$ and $\widehat{S}$.

	\begin{remark}\label{rem:R_0-S-sombrero}
		\
		
		i) Observe that since $S$ is a pure isometry, so is $\widehat{S}$. Now, let $\cM \subseteq L^{2}(\TT,H^{2}_{\KK})$ be a subspace reducing for $U$ and invariant under $\widehat{S}.$ Then, $\cR = \cM \ominus \widehat{S}\cM$ is a wandering subspace for $\widehat{S}$ and by Lemma \ref{lem:A-pure} we have that 
		$$\cM = \bigoplus_{j=0}^{\infty} \widehat{S}^j(\cR).$$ 
		Moreover, item iv) of Lemma \ref{lem:props-range-op} shows that $\cR$ is reducing for $U$. In fact, its range function $J_{\cR}$ produces the wandering subspaces for $S$ associated to $J_\cM(\la)$ for a.e. $\la\in\TT$. Hence, by ii) in Lemma  \ref{lem:props-J} 
		\begin{equation}\label{eq:oplus-S^jR_0}
			J_{\cM}(\la) = \bigoplus_{j=0}^{\infty} S^j(J_{\cR}(\la))
		\end{equation}  and $\dim(J_{\cR}(\la))\leq \dim(\KK)$  
		for a.e. $\la\in\TT$ (see Lemma \ref{lem:dimR_0}).
		
		\vspace{0.5em}
		
		ii) For $i=1,2$, let $\cM_i\subseteq L^2(\TT,H^2_\KK)$ be an invariant subspace for $\widehat{S}$ and $\cR_i$ the wandering subspace for $\widehat{S}$ associated to $\cM_i$. Assume that there exists an isometry $\Phi:L^2(\TT,H^2_\KK) \to L^2(\TT,H^2_\KK)$ such that $\Phi$ commutes with $\widehat{S}$ and $U$ and $\cM_1\simeq_\Phi\cM_2$. Then, by Lemma \ref{lem:isometry-inv-wan}, we have that $\cR_1\simeq_\Phi\cR_2$. Now, as $\Phi$ commutes with $U$, by Proposition \ref{commutant_U_S}, $\Phi\in \widehat{\cF}$, that is, $\Phi = \widehat{F}$ for $F:\TT\to \cB(H^2_\KK)$ an operator-valued function in the class $\cF$. Moreover, using Lemma \ref{lem:props-range-op} we get that $F(\la)$ is an isometry such that $J_{\cR_1}(\la)\simeq_{F(\la)} J_{\cR_2}(\la)$ for a.e. $\la\in\TT$.
	\end{remark}
	
	\subsection{Full-Hardy spaces}\label{full-Hardy}
	In order to understand the structure of spaces that are reducing for $U$ and invariant under $\widehat S$, we first study a subclass of them, the {\it full-Hardy spaces}. As we will see, these are precisely the spaces that are reducing for both, $U$ and $\widehat S$. To give a proper definition, we need the following lemma.
	
	\begin{lemma}\label{lem:range-of-full-hardy}
		The following statements hold:
		\begin{enumerate}
			\item[\rm i)] If $\KK_{1}$ is a closed subspace of $\KK$, then for each $f\in H^{2}_{\KK}$ we have that \begin{equation}
				(P_{H^{2}_{\KK_{1}}}f)(z) = P_{\KK_{1}} (f(z)) \quad \text{for a.e. } z\in\TT,
			\end{equation}
			where $P_{H^2_{\KK_1}}: H^2_\KK\to H^2_\KK$ is the orthogonal projection of $H^2_{\KK}$ onto $H^2_{\KK_1}$ and $P_{\KK_{1}}:\KK\to\KK$ is the orthogonal projection of $\KK$ onto $\KK_1$.
			\item[\rm ii)] Let $J$  be a range function on $\KK$. Then  $J$  is measurable if and only if the map $\la\mapsto H^{2}_{J(\la)}$ is a measurable range function in $H^{2}_{\KK}
			$. 
		\end{enumerate}
	\end{lemma}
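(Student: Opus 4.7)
The natural strategy is to define the candidate operator $T:H^{2}_{\KK}\to H^{2}_{\KK}$ by $(Tf)(z):=P_{\KK_1}(f(z))$ and to verify that $T=P_{H^{2}_{\KK_1}}$. First I would check that $T$ is well defined: for $f\in H^{2}_{\KK}$, the function $z\mapsto P_{\KK_1}(f(z))$ is measurable, $\KK$-valued, and $\|P_{\KK_1}(f(z))\|_{\KK}\leq \|f(z)\|_{\KK}$, so it lies in $L^{2}(\TT,\KK)$; the fact that $P_{\KK_1}$ is a fixed bounded linear map on $\KK$ commutes with taking Fourier coefficients in the weak sense, so negative Fourier coefficients remain zero, giving $Tf\in H^{2}_{\KK}$. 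Next I would show: (a) $T^{2}=T$ and $T^{*}=T$, both directly from $P_{\KK_1}^{2}=P_{\KK_1}=P_{\KK_1}^{*}$ applied pointwise; (b) $\mathrm{Range}(T)\subseteq H^{2}_{\KK_1}$, since $(Tf)(z)\in\KK_1$ a.e.; (c) $T|_{H^{2}_{\KK_1}}=\mathrm{Id}$, because if $f\in H^{2}_{\KK_1}$ then $f(z)\in\KK_1$ a.e. By the uniqueness of orthogonal projections, $T=P_{H^{2}_{\KK_1}}$, which is the claimed pointwise formula.

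\textbf{Plan for part ii).} I would use the equivalent criterion that a range function is measurable if and only if $\la\mapsto\langle P_{J(\la)}x,y\rangle$ is measurable for every pair of vectors $x,y$ in the ambient space. In view of part i), the projection onto $H^{2}_{J(\la)}$ acts pointwise via $P_{J(\la)}$, so for $f,g\in H^{2}_{\KK}$ one has
\begin{equation}
\langle P_{H^{2}_{J(\la)}}f,g\rangle_{H^{2}_{\KK}}=\int_{\TT}\langle P_{J(\la)}f(z),g(z)\rangle_{\KK}\,dz.
\end{equation}

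\textbf{Forward direction.} Assuming $J$ is measurable, I would fix an orthonormal basis $\{\var_i\}_{i\in I}$ of $\KK$, expand $f(z)=\sum_{i}f_i(z)\var_i$ and $g(z)=\sum_{j}g_j(z)\var_j$, and interchange sum and integral (justified by Cauchy--Schwarz and the fact that $\|P_{J(\la)}\|_{op}\leq 1$) to obtain
\begin{equation}
\langle P_{H^{2}_{J(\la)}}f,g\rangle_{H^{2}_{\KK}}=\sum_{i,j\in I}\langle P_{J(\la)}\var_i,\var_j\rangle_{\KK}\,\int_{\TT}f_i(z)\overline{g_j(z)}\,dz.
\end{equation}
Each summand is the product of a measurable function of $\la$ (by hypothesis) and a constant; the countable sum is therefore measurable in $\la$.

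\textbf{Reverse direction.} Assume $\la\mapsto H^{2}_{J(\la)}$ is measurable. Identify $x,y\in\KK$ with the corresponding constant functions in $H^{2}_{\KK}$, so that $\langle x,y\rangle_{H^{2}_{\KK}}=\langle x,y\rangle_{\KK}$ (the Lebesgue measure on $\TT$ being normalized). Applying part i) with these constants gives
\begin{equation}
\langle P_{H^{2}_{J(\la)}}x,y\rangle_{H^{2}_{\KK}}=\int_{\TT}\langle P_{J(\la)}x,y\rangle_{\KK}\,dz=\langle P_{J(\la)}x,y\rangle_{\KK},
\end{equation}
and measurability of the left-hand side transfers to the right, proving that $J$ is measurable.

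\textbf{Main obstacle.} The only delicate point is the Fubini/interchange step in the forward direction: one must justify that the double series representing $\langle P_{J(\la)}f(z),g(z)\rangle_{\KK}$ converges suitably so that integration in $z$ and summation over $i,j$ can be swapped. This is routine via dominated convergence using the bound $\|P_{J(\la)}\|_{op}\le 1$ and $\sum_i\|f_i\|_{L^{2}(\TT)}^{2}=\|f\|^{2}<\infty$, but it is the one place where care is needed.
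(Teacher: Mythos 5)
Your proposal is correct and follows essentially the same route as the paper: part i) is the identical projection-verification argument (define the pointwise operator, check it is a self-adjoint idempotent with range $H^{2}_{\KK_1}$), and part ii) likewise reduces measurability to the scalar functions $\la\mapsto\langle P_{J(\la)}\var_{i},\var_{i'}\rangle_{\KK}$ via part i). The only difference is that the paper tests measurability only on the orthonormal basis $\{S^{j}\var_{i}\}$ of $H^{2}_{\KK}$ (which suffices because finite linear combinations are dense and pointwise limits of measurable functions are measurable), thereby sidestepping for general $f,g$ the sum--integral interchange you flag as the main obstacle.
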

	
	\begin{proof}
		{\rm i)} Let $\cB_1=\{\var_{i}\}_{i \in I}$ be an orthonormal basis of $\KK_1$ (with $\#I=\dim(\KK_1)$). Let $Q:H^2_\KK\to H^2_\KK$ be defined by $(Qf)(z)=P_{\KK_1}(f(z))$ for a.e. $z\in\TT$, and $f\in H^2_\KK$.
		In order to see that $Q$ is a well-defined operator, observe that the $\KK$-valued function $z\mapsto P_\KK(f(z))$ belongs to $H^2_\KK$. Indeed, by completing $\cB_1$ to an orthonormal basis of $\KK$, namely $\cB$, we have that the coordinate functions of $f\in H^2_\KK$ respect to the basis $\cB$ belong to $H^2$ (in particular, $f_i=\langle f(\cdot), \var_i\rangle_\KK \in H^2$ for every $i\in I$). Since $P_\KK(f(z))=\sum_{i\in I} \langle f(z),\var_i\rangle_\KK \var_i$ for a.e. $z\in\TT$, we have that $P_\KK (f(\cdot))\in H^2_\KK$. 
		
		Now, observe that $Q^2=Q$ by computing, for a.e. $z\in\TT$ and $f\in H^2_\KK$,
		\begin{align}
			(Q^2 f)(z)&=(Q(Qf))(z) = P_{\KK_1}((Qf)(z))= P_{\KK_1} (P_{\KK_1}(f(z))) \\
			&= P^2_{\KK_1}(f(z)) = P_{\KK_1}(f(z)) = (Qf)(z).
		\end{align}
		Moreover, we see that $Q^*= Q$, as for every $f,g\in H^2_\KK$:
		\begin{align}
			\langle Qf, g \rangle &= \int_{\TT} \langle (Qf)(z), g(z) \rangle_\KK \,dz 
			= \int_{\TT} \langle P_{\KK_1} (f(z)), g(z) \rangle_\KK \,dz\\
			&= \int_{\TT} \langle f(z), P_{\KK_1} (g(z)) \rangle_\KK \,dz
			=\int_{\TT} \langle f(z), (Qg)(z) \rangle_\KK \,dz = 	\langle f, Qg \rangle.
		\end{align}
		Consequently, we have that $Q$ is an orthogonal projection. Even more, $Q(H^2_\KK)= H^2_{\KK_1}$ since $(Qf)(z)\in \KK_1$ for a.e. $z\in\TT$ and $f\in H^2_\KK$ and $Qf=f$ for every $f\in H^2_{\KK_1}$. 
		Hence, we deduce that $Q=P_{H^2_{\KK_1}}$, which proves statement i).
		
		{\rm ii)} Let $\{\var_{i}\}_{i \in I}$ be an orthonormal basis of $\KK$ (with $\# I=\dim(\KK)$). Then, the system $\{S^j\var_i:j\in\N_0, i \in I\}$ is an orthonormal basis of $H^2_\KK$. 
		Using i) we see that
		\begin{align*}
			\langle P_{H^{2}_{J(\la)}} S^{j} \var_{i},S^{j'} \var_{i'} \rangle
			& = \int_{\TT} \langle (P_{H^{2}_{J(\la)}} S^{j} \var_{i})(z),( S^{j'} \var_{i'} )(z) \rangle_\KK \,dz \\
			& = \int_{\TT} \langle P_{J(\la)} (z^{j} \var_{i}) , z^{j'} \var_{i'} \rangle_\KK \, dz \\
			& = \langle P_{J(\la)} \var_{i} , \var_{i'} \rangle_\KK \int_{\TT}   z^{j-j'} \, dz \\
			&= \delta_{j,j'} \langle P_{J(\la)} \var_{i} , \var_{i'} \rangle_\KK.
		\end{align*}
		From here, statement ii) follows immediately.
	\end{proof}

	\begin{definition}
		Let $J$ be a measurable range function in $\KK$. The {\it full-Hardy} subspace with {\it base} $J$ is the unique closed subspace $\cW\subseteq L^{2}(\TT,H^{2}_{\KK})$ reducing  for $U$ whose range function is given by $\la \mapsto H^{2}_{J(\la)}$ for a.e. $\la\in\TT$.
	\end{definition}
	
	This definition makes sense because, given $J$ a measurable range function in $\KK$, by Lemma \ref{lem:range-of-full-hardy}, we know that $\la \mapsto H^{2}_{J(\la)}$  is a measurable range function in $H^2_\KK$. 
	
	Let us show some properties on these spaces.
	
	\begin{proposition}\label{prop:full-hardy-orthogonal}
		If $\cW\subseteq L^2(\TT,H^2_\KK)$ is a full-Hardy subspace, then so is $\cW^\perp$.
	\end{proposition}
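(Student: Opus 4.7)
Here is my plan. Write $\cW$ as the full-Hardy space with base $J$, so $\cW$ is reducing for $U$ with range function $J_\cW(\la) = H^2_{J(\la)}$ for a.e.\ $\la\in\TT$. By Lemma \ref{lem:props-J} i), $\cW^\perp$ is also reducing for $U$, and its range function is given by $\la\mapsto (H^2_{J(\la)})^\perp$, where the orthogonal complement is taken inside $H^2_\KK$. To conclude that $\cW^\perp$ is a full-Hardy space, it suffices to produce a measurable range function $\widetilde J$ in $\KK$ such that $(H^2_{J(\la)})^\perp = H^2_{\widetilde J(\la)}$ a.e., and the obvious candidate is $\widetilde J(\la) = J(\la)^\perp$.

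The key pointwise identity I need is: for any closed subspace $\KK_1\subseteq\KK$, one has the orthogonal decomposition
\begin{equation}
H^2_\KK = H^2_{\KK_1} \oplus H^2_{\KK_1^\perp},
\end{equation}
so $(H^2_{\KK_1})^\perp = H^2_{\KK_1^\perp}$ inside $H^2_\KK$. Orthogonality is immediate by the pointwise inner product formula, since $f(z)\in\KK_1$ and $g(z)\in\KK_1^\perp$ give $\langle f(z),g(z)\rangle_\KK=0$. For density of the sum, I would invoke Lemma \ref{lem:range-of-full-hardy} i): for any $f\in H^2_\KK$, the functions $z\mapsto P_{\KK_1}f(z)$ and $z\mapsto P_{\KK_1^\perp}f(z)$ are precisely $P_{H^2_{\KK_1}}f$ and $P_{H^2_{\KK_1^\perp}}f$, whose sum reconstructs $f$. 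Applying this identity with $\KK_1=J(\la)$ yields $(H^2_{J(\la)})^\perp = H^2_{J(\la)^\perp}$ for a.e.\ $\la\in\TT$.

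Finally, I need the measurability of $\widetilde J(\la)=J(\la)^\perp$ as a range function in $\KK$. This is immediate from the identity $P_{J(\la)^\perp}=I-P_{J(\la)}$, which gives $\langle P_{\widetilde J(\la)}x,y\rangle_\KK = \langle x,y\rangle_\KK - \langle P_{J(\la)}x,y\rangle_\KK$, a measurable function of $\la$ for each $x,y\in\KK$ since $J$ is measurable. Combining with Lemma \ref{lem:range-of-full-hardy} ii), the map $\la\mapsto H^2_{\widetilde J(\la)}$ is a measurable range function in $H^2_\KK$, and it agrees a.e.\ with the range function of $\cW^\perp$, so $\cW^\perp$ is the full-Hardy subspace with base $\widetilde J=J^\perp$. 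No step looks hard; the only substantive point is the orthogonal decomposition $H^2_\KK=H^2_{\KK_1}\oplus H^2_{\KK_1^\perp}$, which is handled by the pointwise projection formula already established.
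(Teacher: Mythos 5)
Your proposal is correct and follows essentially the same route as the paper: Lemma \ref{lem:props-J} i) gives the range function of $\cW^\perp$ as $\la\mapsto (H^2_{J(\la)})^\perp$, the identity $(H^2_{\KK_1})^\perp=H^2_{\KK_1^\perp}$ identifies this with $H^2_{J(\la)^\perp}$, and Lemma \ref{lem:range-of-full-hardy} ii) handles measurability of $J^\perp$. The only difference is that you supply a proof of the decomposition $H^2_\KK=H^2_{\KK_1}\oplus H^2_{\KK_1^\perp}$ (via the pointwise projection formula), which the paper simply asserts.
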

	
	\begin{proof}
		Assume that $\cW$ is full-Hardy with base $J$, for $J$ a measurable range function in $\KK$. By i) in Lemma \ref{lem:props-J}, we have that the measurable range function of $\cW^\perp$  is given by $\la\mapsto (H^2_{J(\la)})^\perp$ for a.e. $\la\in\TT$. Now, since $(H^2_{J(\la)})^\perp=H^2_{J(\la)^\perp}$, if we denote by  $J^{\perp}$ the range function given by $J^{\perp}(\la) = (J(\la))^\perp$  for a.e. $\la\in\TT$ (which is measurable by means of ii) in Lemma \ref{lem:range-of-full-hardy}), we have that $\cW^\perp$ is the full-Hardy subspace with base $J^\perp$.
	\end{proof}
	
	\begin{remark}\label{rem:reducing-S}
		Given any closed subspace $\KK_1\subseteq \KK,$ we have that $H^{2}_{\KK} = H^{2}_{\KK_1}\oplus H^{2}_{\KK_1^{\perp}}$. The fact that both components are invariant under $S$ implies that $H^{2}_{\KK_1}$ is reducing for $S$. Actually, these are the only \textit{reducing} subspaces for $S$ (see, for instance, \cite[Theorem 3.22]{RR}). 
	\end{remark}
	
	As we anticipated, full-Hardy spaces are those that are reducing for $U$ and for $\widehat{S}$. We prove this fact in the next theorem. 
	
	\begin{theorem}\label{prop:W-reduncing-for-U-S}
		Let $\cW\subseteq L^2(\TT,H^2_\KK)$ be a closed subspace. Then, $\cW$ is simultaneously reducing for $U$ and for $\widehat{S}$ if and only if $\cW$ is a full-Hardy subspace.
	\end{theorem}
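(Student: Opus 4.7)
The plan is to prove both directions by combining the fiberwise characterization of $\widehat{S}$-invariance from Lemma \ref{lem:invariance_J_S}, the classification of reducing subspaces of $S$ in $H^2_\KK$ from Remark \ref{rem:reducing-S}, and the two-way measurability statement in item ii) of Lemma \ref{lem:range-of-full-hardy}.

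For the implication ($\Leftarrow$), suppose $\cW$ is the full-Hardy subspace with base $J$, a measurable range function in $\KK$. By definition $\cW$ reduces $U$. Its range function is $\la\mapsto H^2_{J(\la)}$, and each $H^2_{J(\la)}$ is reducing (hence invariant) under $S$ by Remark \ref{rem:reducing-S}. Lemma \ref{lem:invariance_J_S} then yields that $\cW$ is invariant under $\widehat{S}$. By Proposition \ref{prop:full-hardy-orthogonal}, $\cW^{\perp}$ is also full-Hardy, with base $J^{\perp}$; the same argument applied to $\cW^{\perp}$ gives invariance under $\widehat{S}$. Therefore $\cW$ reduces $\widehat{S}$.

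For the implication ($\Rightarrow$), assume $\cW$ reduces both $U$ and $\widehat{S}$, and let $J_\cW$ be its measurable range function in $H^2_\KK$. Since $\cW$ is $\widehat{S}$-invariant, Lemma \ref{lem:invariance_J_S} gives that $J_\cW(\la)$ is $S$-invariant for a.e. $\la\in\TT$. Since $\cW^\perp$ also reduces $U$ and $\widehat{S}$, item i) of Lemma \ref{lem:props-J} says that its range function is $\la\mapsto J_\cW(\la)^{\perp}$, and Lemma \ref{lem:invariance_J_S} applied to $\cW^\perp$ gives that $J_\cW(\la)^\perp$ is $S$-invariant for a.e. $\la$. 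Hence $J_\cW(\la)$ is reducing for $S$ for a.e. $\la$. By Remark \ref{rem:reducing-S}, for each such $\la$ there exists a unique closed subspace $J(\la)\subseteq\KK$ with
\begin{equation}
J_\cW(\la)=H^2_{J(\la)}.
\end{equation}
Setting $J(\la)=\{0\}$ on the null set where the above fails defines a range function $J$ in $\KK$.

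It remains to verify that $J$ is measurable, as this is precisely what is needed to conclude $\cW$ is full-Hardy with base $J$. But the range function $\la\mapsto H^2_{J(\la)}$ coincides almost everywhere with $J_\cW$, which is measurable in $H^2_\KK$ since it is the range function of a subspace reducing $U$. Item ii) of Lemma \ref{lem:range-of-full-hardy} then asserts that this measurability in $H^2_\KK$ is equivalent to the measurability of $J$ in $\KK$. Therefore $J$ is measurable and $\cW$ is the full-Hardy subspace with base $J$, completing the proof. The only delicate point is the extraction of a measurable base $J$ from the fiberwise reducing structure, and this is handled entirely by the converse implication in Lemma \ref{lem:range-of-full-hardy}(ii), so no additional selection argument is required.
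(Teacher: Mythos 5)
Your proof is correct and follows essentially the same route as the paper's: both directions rest on Lemma \ref{lem:invariance_J_S}, Lemma \ref{lem:props-J}, Remark \ref{rem:reducing-S}, Proposition \ref{prop:full-hardy-orthogonal}, and the measurability equivalence in Lemma \ref{lem:range-of-full-hardy}(ii). Your write-up is just a slightly more explicit version of the paper's argument, so nothing further is needed.
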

	
	\begin{proof}
		Let $\cW$ be reducing for $U$ and for $\widehat{S}$ with range function  $J_{\cW}$  in $H^2_{\KK}$. By Lemma \ref{lem:invariance_J_S} and i) in Lemma \ref{lem:props-J}, we have that $J_{\cW}(\la)\subseteq H^2_\KK$ is reducing for $S$ for a.e. $\la\in\TT$ (since $\cW$ and $\cW^\perp$ are invariant under $\widehat{S}$, then $J_\cW(\la)$ and $J_{\cW^\perp}(\la)=(J_{\cW}(\la))^\perp$ are invariant under $S$ a.e. $\la\in\TT$). 
		As discussed in Remark \ref{rem:reducing-S}, this implies that $J_{\cW}(\la)= H^2_{\KK_\la}$, where $\KK_\la$ is a closed subspace of $\KK$ for a.e. $\la\in\TT$. By ii) in Lemma \ref{lem:range-of-full-hardy},  since $J_\cW:\la \mapsto H^2_{\KK_\la}$ is measurable, then so is $J:\la \mapsto \KK_\la$. We conclude that $\cW$ is full-Hardy with base  $J$.
		
		Conversely, if $\cW$ is full-Hardy with base $J$, for $J$ a measurable range function in $\KK$, by Proposition \ref{prop:full-hardy-orthogonal} and Lemma \ref{lem:invariance_J_S}, we have that $\cW$ and $\cW^\perp$ are clearly invariant under $\widehat{S}$. Then, $\cW$ is reducing for~$\widehat{S}$.
	\end{proof}
	
	\begin{remark}\label{rem:full-hardy}
		Let $J$ be a measurable range function in $\KK$, let $\cW\subseteq L^2(\TT,H^2_\KK)$ be a full-Hardy subspace with base $J$ and let $\cR$ be the wandering subspace for $\widehat{S}$ associated to $\cW$, that is $\cR=\cW\ominus\widehat{S}\cW.$ By Remark \ref{rem:R_0-S-sombrero}, we know that $\cR$ is reducing for $U$ and that its range function is given by 
		$$J_\cR(\la)= J_{\cW}(\la)\ominus S(J_\cW(\la))=H^2_{J(\la)}\ominus S(H^2_{J(\la)})=J(\la)$$ for a.e. $\la\in\TT$, where $J(\la)\subseteq \KK \subseteq H^2_\KK$ is understood as a subspace of $H^2_\KK$ of constant functions. This implies that the wandering subspace for $\widehat{S}$ associated to a full-Hardy subspace can be seen as a subspace of $L^2(\TT,\KK)\subseteq L^2(\TT,H^2_\KK)$ reducing for $U$.
	\end{remark}
	
	Combining this with ii) of Remark \ref{rem:R_0-S-sombrero}, we see that given two full-Hardy subspaces $\cW_1,\cW_2$ with respective bases $J_1,J_2$  such that $\cW_1\simeq_\Phi\cW_2$, where $\Phi:L^2(\TT,H^2_\KK) \to L^2(\TT,H^2_\KK)$ is an isometry  that  commutes with $\widehat{S}$ and $U$, then $J_1(\la)\simeq J_2(\la)$ isometrically for a.e. $\la\in\TT$.

	\subsection{Subspaces reducing for $U$ and invariant under $\widehat S$}\label{characterization}
	In Theorem \ref{prop:W-reduncing-for-U-S} we proved that full-Hardy subspaces are the only  reducing subspaces for $U$ that are also reducing for $\widehat{S}$ (in particular, invariant under $\widehat{S}$). Theorem \ref{characterization_hatS_inv}
	shows that any other subspace reducing for $U$ and invariant under $\widehat{S}$ needs to be isometrically isomorphic to a full-Hardy subspace.

	\subsubsection{Proof of Theorem \ref{characterization_hatS_inv}}
	
	\begin{proof} 
		i) $\Rightarrow$ ii) 
		We wish to find a full-Hardy subspace $\cW\subseteq L^{2}(\TT,H^{2}_{\KK})$ and a partial isometry $\Phi:L^{2}(\TT,H^{2}_{\KK}) \rightarrow L^{2}(\TT,H^{2}_{\KK})$ with initial space $\cW$ that commutes with $U$ and $\widehat{S}$ such that $\cW\simeq_\Phi\cM$. 
		Notice that since $\Phi$ must commute with $U$, by Proposition \ref{commutant_U_S}, $\Phi\in \widehat{\cF}$, that is, $\Phi = \widehat{F}$ for $F:\TT\to \cB(H^2_\KK)$ an operator-valued function in the class $\cF$. Let $\cR$ be the wandering subspace for $\widehat{S}$ associated to $\cM$, i.e. $\cR=\cM\ominus\widehat{S}\cM$. From the discussions in item ii) of Remark \ref{rem:R_0-S-sombrero} and Remark \ref{rem:full-hardy}, we deduce that the basis range function $J$ of $\cW$ should satisfy that $J(\la)\simeq_{F(\la)} J_\cR(\la)$ for a.e. $\la\in\TT$.
		
		Thus, the idea of the proof will be to construct first a measurable range function $J$ in $\KK$ such that $\dim(J(\la)) = \dim(J_{\cR}(\la))$ for a.e. $\la\in\TT$, and secondly, a measurable operator-valued function $F:\TT\to\cB(H^2_\KK)$ such that for a.e. $\la\in\TT$, $F(\la)$ is a partial isometry with initial space $H^{2}_{J(\la)}$ that commutes with $S$ and satisfies that 
		\begin{equation}\label{eq:initial-space-F}
			F(\la)(H^{2}_{J(\la)}) = J_{\cM}(\la).
		\end{equation}    
		Because the dimension of $J_\cR(\la)$ may vary with $\la\in\TT$, to pursue these constructions we will consider the measurable sets $\{A_{n}\}_{n\in \N_0\cup\{\infty\}}$ and the functions $\{\phi_{i}\}_{i\in\N}\subset L^{\infty}(\TT,H^{2}_{\KK})$ provided by Lemma~\ref{measurable_sets} applied to the subspace $\cR$.

		Fix $\{\var_i\}_{i\in I}$ an orthonormal basis of $\KK$ where $I=\{1, \dots, k\}$ in case that $\dim(\KK)=k\in\N$ or $I=\N$ if $\dim(\KK)=\infty$.   For every $n\in I$, let us define the subspaces $\KK_{n} = \spn \{\varepsilon_{1},\dots,\varepsilon_{n}\}$ and $\KK_0=\{0\}$. 
		We will construct the range function $J$ in $\KK$ as follows: 
		$$J(\la) =\begin{cases}
			\KK_{n}\quad &\text {if }\la \in A_n,\, n\in I\\
			\KK &\text {if }\la \in A_{\infty}.
		\end{cases}$$
		It is clear that $J$ is measurable since it is constant on each measurable set $A_n$ in the partition of $\TT$.
		In addition, we have that $\dim (J_{\cR}(\la) )= \dim (J(\la))$ for a.e. $\la \in \TT$ by definition of $J$, because $A_n=\{\la\in\TT\,:\,\dim (J_{\cR}(\lambda))=n \}$. We take $\cW\subseteq L^2(\TT,H^2_\KK)$ as the full-Hardy space with base $J$.
		
		To construct $F$ as in \eqref{eq:initial-space-F}, we will proceed in the following way: 
		for a.e. $\la\in A_0$, define $F(\la)\equiv0$. Now, let us fix $n\in\N$. Observe that from \eqref{eq:oplus-S^jR_0} we can deduce that, for a.e. $\la\in A_n$, the system $\{S^{j} \phi_{i}(\la):\,i=1,\dots,n,\,j\in\N_0\}$ is an orthonormal basis of $J_{\cM}(\la)$. On the other hand, for a.e. $\la\in A_n$, we have that $\{S^j\var_i:\,i=1,\dots,n,\,j\in\N_0\}$ is an orthonormal basis of $H^2_{J(\la)}$. Thus, for a.e. $\la\in A_n$, we define $F(\la):H^2_{J(\la)}\to J_\cM(\la)$ as
		\begin{equation}\label{eq:partial-isometry}
			F(\la)( S^{j} \var_{i} ) = S^{j}\phi_{i}(\la),\quad i=1,\dots,n, \, j\geq 0,
		\end{equation}
		extended by linearity to the whole $H^2_{J(\la)}$. 
		For $n=\infty$, we proceed similarly because, for a.e. $\la\in A_\infty$,  the system $\{S^{j} \phi_{i}(\la):\,i\in\N,\,j\in\N_0\}$ is an orthonormal basis of $J_{\cM}(\la)$,  and  
		$\{S^j\var_i:\,i\in\N,\,j\in\N_0\}$ is an orthonormal basis of $H^2_{J(\la)}$. Thus, 
		we define $F(\la)$ as in \eqref{eq:partial-isometry} but for every $i\in\N$.  
		
		Now, for a.e. $\la\in\TT$, we set $F(\la)f = 0$ for every $f$ in $(H^{2}_{J(\la)})^\perp\subseteq H^2_\KK$, getting that the operator $F(\la):H^{2}_{\KK} \rightarrow H^{2}_{\KK}$ is a partial isometry with initial space $H^{2}_{J(\la)}$ and equation \eqref{eq:initial-space-F} holds. Finally, it is clear that $F(\la)$ commutes with $S$ for a.e. $\la\in \TT$.
		
		Let us see that the operator-valued function $F:\TT\to \cB(H^2_\KK)$ constructed above belongs to the class $\cF$. Indeed, it satisfies that $\|F\|_\infty<\infty$ as $\|F(\la)\|_{op}\leq 1$ for a.e. $\la\in\TT$.
		To show that it is measurable we need to see that the complex-valued function
		\begin{equation}\label{eq:measurability-F}
			\la\mapsto \left\langle F(\la) f,g \right\rangle_{H^{2}_{\KK}} = \langle F(\la) P_{H^2_{J(\la)}}f,g \rangle_{H^{2}_{\KK}} 
		\end{equation} is measurable for every $f,g\in H^{2}_{\KK}$. 
		Again, let us prove this over each set of the measurable partition $\{A_{n}\}_{n\in \N_0\cup\{\infty\}}$ of $\TT$. On the set $A_0$ it is clear that $F(\la)\equiv 0$ is measurable. Now, fix $n\in\N$. For a.e. $\la\in A_n$ and for $f\in H^2_\KK$, we have that
		\begin{equation}
			P_{H^{2}_{J(\la)}}f = \sum_{j=0}^{\infty}\sum_{i=1}^{n} \left\langle f, S^{j} \var_{i} \right\rangle_{H^{2}_{\KK}} S^{j} \var_{i},
		\end{equation}
		and therefore
		\begin{align*}
			\left\langle F(\la) P_{H^{2}_{J(\la)}}f,g \right\rangle 
			&=  \sum_{j=0}^{\infty}\sum_{i=1}^{n} \left\langle f, S^{j}\var_{i} \right\rangle_{H^{2}_{\KK}}  \left\langle F(\la) (S^{j}\var_{i}) ,g \right\rangle_{H^{2}_{\KK}} \\
			&=  \sum_{j=0}^{\infty}\sum_{i=1}^{n}  \left\langle f, S^{j} \var_{i} \right\rangle_{H^{2}_{\KK}}  \left\langle S^{j}\phi_{i}(\la) ,g \right\rangle_{H^{2}_{\KK}}. 
		\end{align*} 
		Then, \eqref{eq:measurability-F} is measurable for every $f,g\in H^2_\KK$ since so is $$\la\mapsto \left\langle S^{j}\phi_{i}(\la) ,g \right\rangle_{H^{2}_{\KK}} 
		= \left\langle (\widehat{S}^{j}\phi_{i})(\la),g \right\rangle_{H^{2}_{\KK}} $$ for every $j\in\N_0$ and $i=1,\dots,n$. 
		
		If $n=\infty$, we proceed in the same way but taking into account that 
		\begin{equation}
			P_{H^{2}_{J(\la)}}f = \sum_{j=0}^{\infty}\sum_{i=1}^{\infty} \left\langle f, S^{j} \var_{i} \right\rangle_{H^{2}_{\KK}} S^{j} \var_{i}.
		\end{equation}
		
		Let $\Phi: L^{2}(\TT,H^{2}_{\KK}) \rightarrow L^{2}(\TT,H^{2}_{\KK})$ be the function defined as $\Phi = \widehat{F}$ (an thus commuting with $U$). Since $F(\la)$ commutes with $S$ then it is easily seen that $\Phi$ commutes with $\widehat{S}$. Finally, by i) and iii) in Lemma \ref{lem:props-range-op}, since $F(\la)$ is an isometry with initial space $H^{2}_{J(\la)}$ and \eqref{eq:initial-space-F} holds for a.e. $\la\in\TT$, we conclude that $\Phi$ is a partial isometry with initial space $\cW$ and $\Phi(\cW) = \cM$. 
		
		ii) $\Rightarrow$ i) Assuming that $\cW$ is a full-Hardy space with base range function $J$, we have that $\cW$ is reducing for $U$ and $\widehat{S}$ (see Theorem \ref{prop:W-reduncing-for-U-S}). Then, as $\Phi$ commutes with $\widehat{S}$ and $\cM = \Phi(\cW)$ we get that
		$$\widehat{S}(\cM) = \widehat{S}\Phi(\cW) =  \Phi \widehat{S}(\cW) \subseteq \Phi(\cW) = \cM,$$
		that is, $\cM$ is invariant under $\widehat{S}$. Analogously, it can be seen that $\cM$ is reducing for $U$ given that $\Phi$ also commutes with $U$ and $U^{*}$.
	\end{proof}

	The fact that the initial subspace of the partial isometry in Theorem \ref{characterization_hatS_inv}
	is full-Hardy is a consequence of the commutation with $U$ and $\widehat{S}$ as the next proposition shows.
	
	\begin{proposition}
		If $\Phi: L^2(\TT,\KK) \rightarrow  L^2(\TT,\KK)$  is a partial isometry that commutes with $U$ and $\widehat{S}$, then the initial space of $\Phi$ is full-Hardy.
	\end{proposition}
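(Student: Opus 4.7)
My plan is to prove that $\mathcal{I} := (\ker \Phi)^\perp$, the initial space of $\Phi$, is reducing for both $U$ and $\widehat{S}$; by Theorem \ref{prop:W-reduncing-for-U-S} this will force $\mathcal{I}$ to be full-Hardy. The easy half is quick: since $\Phi$ commutes with the unitary $U$, it commutes with $U^*$ too, so $\ker\Phi$ is $U$- and $U^*$-invariant, whence $\mathcal{I}$ reduces $U$. The commutation $\Phi\widehat{S}=\widehat{S}\Phi$ immediately yields $\widehat{S}(\ker\Phi)\subseteq\ker\Phi$, so $\mathcal{I}$ is $\widehat{S}^*$-invariant.

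The main obstacle is showing $\mathcal{I}$ is $\widehat{S}$-invariant, since $\widehat{S}$ is not unitary and $\Phi^*$ need not commute with $\widehat{S}$. My strategy is to apply Theorem \ref{characterization_hatS_inv} to the final space $\cM:=\Phi(\mathcal{I})=\Phi(L^2(\TT,H^2_\KK))$, which is readily seen to be reducing for $U$ and invariant under $\widehat{S}$ (for the latter, $\widehat{S}\cM=\Phi(\widehat{S}L^2(\TT,H^2_\KK))\subseteq\Phi(L^2(\TT,H^2_\KK))=\cM$). The theorem then produces a full-Hardy subspace $\cW$ and a partial isometry $\Phi_0$ with initial space $\cW$, commuting with $U$ and $\widehat{S}$, satisfying $\Phi_0(\cW)=\cM$. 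I would then introduce $\Psi := \Phi_0^*\Phi$, which is a partial isometry with initial space $\mathcal{I}$ and range $\cW$: a short computation gives $\Psi^*\Psi = \Phi^*\Phi_0\Phi_0^*\Phi = \Phi^* P_\cM \Phi = \Phi^*\Phi = P_{\mathcal{I}}$, using that $P_\cM\Phi=\Phi$.

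The key observation is that $\Phi_0$ restricts to an isometric bijection $\cW\to\cM$ intertwining $\widehat{S}$ (both spaces being $\widehat{S}$-invariant), and therefore its inverse $\Phi_0^*|_\cM:\cM\to\cW$ also intertwines $\widehat{S}$; this yields $\Psi\widehat{S}f=\widehat{S}\Psi f$ for every $f\in\mathcal{I}$. To finish, I would decompose $\widehat{S}f=h+h'$ with $h\in\mathcal{I}$ and $h'\in\mathcal{I}^\perp=\ker\Psi$. Applying $\Psi$ gives $\Psi h=\widehat{S}\Psi f$, and since $\Psi|_\mathcal{I}$ and $\widehat{S}$ are both isometries, taking norms yields $\|h\|=\|\Psi h\|=\|\widehat{S}\Psi f\|=\|\Psi f\|=\|f\|$. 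Combined with the Pythagorean identity $\|\widehat{S}f\|^2=\|f\|^2=\|h\|^2+\|h'\|^2$, this forces $h'=0$, so $\widehat{S}f\in\mathcal{I}$. Together with the earlier $\widehat{S}^*$-invariance, $\mathcal{I}$ reduces $\widehat{S}$, completing the proof.
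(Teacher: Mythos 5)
Your argument is correct, but the route you take for the key step is far more elaborate than necessary, and in fact your own final computation already contains the direct proof. The overall skeleton matches the paper's: show the initial space $\mathcal{I}$ reduces both $U$ and $\widehat{S}$, then invoke Theorem \ref{prop:W-reduncing-for-U-S}. Where you differ is in establishing $\widehat{S}(\mathcal{I})\subseteq\mathcal{I}$: you invoke the main Theorem \ref{characterization_hatS_inv} applied to the range $\cM=\Phi(L^2(\TT,H^2_\KK))$, build the auxiliary partial isometry $\Psi=\Phi_0^*\Phi$, verify that it intertwines $\widehat{S}$ on $\mathcal{I}$, and only then run the norm-plus-Pythagoras argument. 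The paper instead isolates a two-line abstract lemma: if $T$ is an isometry and $V$ is a partial isometry commuting with $T$, then the initial space of $V$ reduces $T$; the invariance of $\ker V$ is immediate from $VTf=TVf=0$, and for $f$ in the initial space one has $\|VTf\|=\|TVf\|=\|Vf\|=\|f\|=\|Tf\|$, so $Tf$ lies in the initial space because a partial isometry preserves the norm of a vector exactly when that vector is in its initial space (the same Pythagoras observation you use at the end). Applying this lemma once to $U$ and once to $\widehat{S}$ finishes the proof. Notice that your closing computation, with $\Psi$ replaced by $\Phi$ itself, is precisely this: for $f\in\mathcal{I}$, $\|\Phi\widehat{S}f\|=\|\widehat{S}\Phi f\|=\|\Phi f\|=\|f\|=\|\widehat{S}f\|$, which already forces $\widehat{S}f\in\mathcal{I}$. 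So the detour through Theorem \ref{characterization_hatS_inv}, $\Phi_0$, and $\Psi$ is sound but buys you nothing; the paper's version also has the advantage of being a general Hilbert-space fact with no dependence on the characterization theorem.
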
 
	
	\begin{proof}
		We need the following property:
		
		a) \label{full}
		If $T,V :\HH\rightarrow \HH$, $T$ is an isometry  and $V$ a partial isometry that commutes with $T$, then the initial space of $V$ reduces $T$.
		
		To prove a) set $\cM$ the initial space of $V$ and observe that if $f \in \cM^{\perp}$ then $0=TVf = VTf$  implies
		that $Tf \in {\text Ker}(V) = \cM^\perp$. Thus $\cM^\perp$ is invariant under $T$.
		On the other hand, if $f\in \cM$ then,
		$$ ||VTf|| = ||TVf|| = ||Vf|| =||f|| =||Tf||.$$
		So, $V$ preserves the norm of $Tf$ which implies that $Tf$ is in the initial space of $V$.
		We conclude that  $\cM$ reduces $T$ which ends the proof of a).
		
		Now, let $\cW \subseteq L^2(\TT,H^2_{\KK})$ be the initial space of $\Phi.$
		We apply a) twice. First to $\Phi$ and $U$, from where we conclude that  $\cW$ reduces $U$. Second to $\Phi$ an $\widehat{S}$ which gives that $\cW$ reduces $\widehat{S}$.
		Now, by Theorem \ref{prop:W-reduncing-for-U-S} in this paper, $\cW$ is full-Hardy.
	\end{proof}

	Taking $\KK=\CC$, we can deduce a characterization of the closed subspaces of $L^2(\TT,H^2)$ which are reducing for $U$ and invariant under $\widehat{S}$, which bear a resemblance to Beurling's Theorem (Theorem \ref{thm:beurling}).
	
	\begin{corollary}\label{M_reducingU_invariantS}
		Let $\mathcal{M}\subseteq L^{2}(\TT,H^{2})$ be a closed subspace. The following statements are equivalent: 
		\begin{enumerate}
			\item[\rm i)] $\mathcal{M}$ is reducing for $U$ and invariant under $\widehat{S}$.		
			
			\item[\rm ii)]  There exists $\phi \in L^{2}(\TT,H^{2})$ such that $\phi(\lambda)$ is an inner function for a.e. $\la \in \sigma(\cM)$ and $\mathcal{M} = \phi \, L^{2}(\TT,H^{2})$, where $\sigma(\cM)$ is defined in \eqref{eq:specturm}.
		\end{enumerate}
		
	\end{corollary}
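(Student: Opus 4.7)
The plan is to specialize Theorem \ref{characterization_hatS_inv} to $\KK=\CC$ and unwind the abstract data (a full-Hardy space together with a commuting partial isometry) into the concrete one-function description of the statement.

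For (i) $\Rightarrow$ (ii), I would first apply Theorem \ref{characterization_hatS_inv} to obtain a full-Hardy subspace $\cW$ and a partial isometry $\Phi$ commuting with $U$ and $\widehat{S}$ with $\cW\simeq_{\Phi}\cM$. Since every closed subspace of $\CC$ is either $\{0\}$ or $\CC$, the base range function of $\cW$ takes the form $J(\la)=\chi_{E}(\la)\CC$ for some measurable set $E\subseteq\TT$, so $\cW=\chi_{E}\,L^{2}(\TT,H^{2})$. To identify $\Phi$, commutation with $U$ gives $\Phi=\widehat{F}$ by Proposition~\ref{commutant_U_S}(i), and commutation with $\widehat{S}$ forces $F(\la)S=SF(\la)$ a.e., so Proposition~\ref{commutant_U_S}(ii) applied with $\KK=\CC$ tells us that each $F(\la)$ is multiplication on $H^{2}$ by some $\phi(\la)\in H^{\infty}$. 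Setting $\phi(\la):=F(\la)(\mathbf{1})$ produces a measurable map $\la\mapsto\phi(\la)\in H^{2}$, and $F(\la)g=\phi(\la)g$ for every $g\in H^{2}$ follows from commutation with $S$.

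Lemma~\ref{lem:props-range-op}(iii) then says that $F(\la)$ is a partial isometry with initial space $H^{2}_{J(\la)}=\chi_{E}(\la)H^{2}$, so $\phi(\la)=0$ off $E$, while for $\la\in E$ multiplication by $\phi(\la)$ is an isometry on $H^{2}$, which is classically equivalent to $\phi(\la)$ being inner. Hence $\|\phi(\la)\|_{H^{2}}=\chi_{E}(\la)$, so $\phi\in L^{2}(\TT,H^{2})$. The identity $\cM=\Phi(\cW)=\phi\,L^{2}(\TT,H^{2})$ is direct (since $\phi$ vanishes off $E$), and Lemma~\ref{lem:props-range-op}(i) gives the range function of $\cM$ as $\la\mapsto\phi(\la)H^{2}$, whence $\sigma(\cM)=E$ up to a null set and $\phi(\la)$ is inner on $\sigma(\cM)$, as required.

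For (ii) $\Rightarrow$ (i) I would argue directly. Define $\Phi f:=\phi\cdot f$ by pointwise multiplication. The identities $(\Phi Uf)(\la)(z)=\phi(\la)(z)\la f(\la)(z)=(U\Phi f)(\la)(z)$ and $(\Phi\widehat{S}f)(\la)(z)=\phi(\la)(z)zf(\la)(z)=(\widehat{S}\Phi f)(\la)(z)$ show commutation with $U$ and $\widehat{S}$. After replacing $\phi$ with $\chi_{\sigma(\cM)}\phi$ (which does not alter $\cM$), the inner-on-$\sigma(\cM)$ hypothesis makes $\Phi$ a partial isometry with full-Hardy initial space $\chi_{\sigma(\cM)}L^{2}(\TT,H^{2})$, so the (ii) $\Rightarrow$ (i) direction of Theorem~\ref{characterization_hatS_inv} delivers that $\cM$ is reducing for $U$ and invariant under $\widehat{S}$. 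The main obstacle is measurability: one needs the inner function $\phi(\la)$ that Beurling's theorem would provide fiberwise in each $J_{\cM}(\la)$ to vary measurably in $\la$. Applying Beurling directly to each fiber does not visibly give this, but routing through Theorem~\ref{characterization_hatS_inv} and the definition $\phi(\la)=F(\la)(\mathbf{1})$ reads $\phi$ off of a measurable operator-valued function. The only other non-formal input is the classical equivalence between multiplication by $\phi\in H^{\infty}$ being isometric on $H^{2}$ and $\phi$ being inner, which I would cite rather than reprove.
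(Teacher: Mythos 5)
Your proposal is correct and follows essentially the same route as the paper: both directions specialize Theorem \ref{characterization_hatS_inv} to $\KK=\CC$, identify $\cW=\chi_E L^2(\TT,H^2)$, use the commutant of $S$ to write each $F(\la)$ as multiplication by an $H^\infty$ function, and read off $\phi(\la)=F(\la)(\mathbf{1})$ to get measurability. The only (immaterial) difference is in (ii) $\Rightarrow$ (i), where the paper simply verifies $\widehat{S}\cM\subseteq\cM$, $U\cM\subseteq\cM$ and $U^*\cM\subseteq\cM$ directly from $\cM=\phi L^2(\TT,H^2)$ rather than routing back through Theorem \ref{characterization_hatS_inv}.
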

	
	\begin{proof}
		i) $\Rightarrow$ ii) By taking $\KK=\CC$ in Theorem \ref{characterization_hatS_inv}, there exist a full-Hardy space $\cW\subseteq L^{2}(\TT,H^{2})$ and $\Phi:L^{2}(\TT,H^{2}) \rightarrow L^{2}(\TT,H^{2})$ a partial isometry with initial space $\cW$ that commutes with $U$ and $\widehat{S}$ such that $\cM=\Phi(\cW)$. 
		
		First, observe that since $\Phi=\widehat{F}$, where $F:\TT\to\cB(H^2)\in\cF$ is an operator-valued function such that $F(\la)$ is a partial isometry with initial space $J_\cW(\la)$ and  \eqref{eq:initial-space-F} holds, then $J_\cM(\la)$ and $J_\cW(\la)$ are isomporphic a.e. $\la\in\TT$ (see items i) and iii) of Lemma \ref{lem:props-range-op}). This implies that $E:=\sigma(\cM) = \sigma(\cW)$.
		Let $J$ be the range function in $\CC$ for which $J_\cW(\la)=H^2_{J(\la)}$ for a.e. $\la\in\TT$.
		We observe that  the measurable range function $J$ in $\CC$ has the form $J(\la) = \cX_{E}(\la) \, \CC$.
		Then, $H^{2}_{J(\la)} = \cX_{E}(\la)H^{2}$ for a.e. $\la\in\TT$, which implies that 
		\begin{equation}
			\cW =\{ f\in L^{2}(\TT,H^{2}) : f(\la) \in \cX_{E}(\la) H^{2} \} = \cX_{E} L^{2}(\TT,H^{2}).
		\end{equation}

		Moreover, since $F(\la)$ commutes with $S$, for a.e. $\la\in\TT$, there exists a function $h_{\la} \in H^{\infty}$ such that $F(\la)=M_{h_{\la}}$, where $M_{h_\la}(f)=h_\la f$ for $f\in H^2$ (see \cite[Theorem 3.4]{RR}).  Further, as $F(\la)$ is a partial isometry with initial space $\cX_E(\la)H^2$ for a.e. $\la\in\TT$, then $h_\la\equiv 0$ for a.e.  $\la\in\TT\setminus E$ and, for a.e. $\la\in E$, it holds that $|h_{\la}(z)| = 1$ (that is, $h_\la$ is an inner function).
		Using the measurability of $\la\mapsto F(\la)$ we get that $\la \mapsto h_{\la} = F(\la)1$ is an $H^2$-valued measurable function. 
		Hence, if we define $\phi(\la)= h_{\la}$ for a.e. $\la\in\TT$, we obtain that $\phi\in L^2(\TT,H^2)$. In fact, it can be seen that $\Phi =  M_\phi$, where $M_\phi (f) = \phi f$ for every $f\in L^2(\TT,H^2)$. Indeed, for $f\in L^2(\TT,H^2)$ and for a.e. $\la\in\TT$,
		\begin{equation}
			(\Phi f)(\la) = F(\la) f(\la) = h_\la f(\la) = \phi(\la) f(\la) = (\phi f)(\la) = (M_\phi(f)) (\la).
		\end{equation} 
		Consequently, $\cM = \Phi(\cW) = \phi \cX_E L^{2}(\TT, H^{2}) = \phi L^{2}(\TT, H^{2}).$
		
		ii)$\Rightarrow$ i) Assuming that $\cM = \phi L^2(\TT,H^2)$ with $\phi \in L^{2}(\TT,H^{2})$ such that $\phi(\lambda)$ is an inner function for a.e. $\la \in \sigma(\cM)$, then 
		\begin{equation}
			\widehat{S}(\cM) = \widehat{S}\phi L^2(\TT,H^2) =\phi  \widehat{S}L^2(\TT,H^2) \subseteq \phi L^2(\TT,H^2) = \cM.
		\end{equation}
		Analogously, it can be seen that $U\cM\subseteq \cM$ and $U^*\cM\subseteq \cM$. 
	\end{proof}
	
	We finish this section with a result that describes the range functions associated to subspaces in $L^2(\TT, H^2)$ of the form $\phi L^2(\TT, H^2) $ for some $\phi\in L^2(\TT, H^2)$. In particular, this shows how are the range functions associated to subspaces that are reducing for $U$ and invariant for $\widehat S$.

	\begin{proposition}\label{porp:range-function-phiL2}
		Let  $\phi \in L^{2}(\TT,H^{2})$. If $\mathcal{M} = \phi \, L^{2}(\TT,H^{2})$, 
		then the range function associated to $\cM$ is given by 
		$$J_\cM(\la)=\phi(\la)H^2 \quad \textrm{ a.e. }\la\in\TT.$$
	\end{proposition}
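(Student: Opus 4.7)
My approach would invoke the second statement in Theorem~\ref{reducing_U}: if a reducing subspace is generated by iterating $U$ on a countable family $\cA\subseteq L^2(\TT,H^2)$, then its range function is, a.e., the pointwise closed linear span of the values of the generators. Note first that $\cM=\phi L^2(\TT,H^2)$ is reducing for $U$ (this was established in Corollary~\ref{M_reducingU_invariantS}), so the range function $J_\cM$ is well-defined.

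The plan is to take $\cA=\{\widehat S^j \phi:j\in\N_0\}$ as the generating family. The inclusion $\overline{\spn}\{U^k\widehat S^j \phi:k\in\Z,j\in\N_0\}\subseteq\cM$ is immediate, since $\phi\in\cM$ and $\cM$ is $\widehat S$-invariant and $U$-reducing. For the reverse inclusion, I would use that $\{\la^k z^j:k\in\Z, j\in\N_0\}$ forms an orthonormal basis of $L^2(\TT,H^2)$, so any $g\in L^2(\TT,H^2)$ is an $L^2$-limit of finite linear combinations $g_n$ of these basis elements; for each such basis element, $\phi\cdot(\la^k z^j)=U^k\widehat S^j\phi$, so $\phi g_n$ lies in $\spn\{U^k\widehat S^j\phi\}$. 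Passing to the limit (which requires multiplication by $\phi$ to be continuous on $L^2(\TT,H^2)$, guaranteed whenever $\phi(\la)$ is essentially bounded, e.g.\ inner as in Corollary~\ref{M_reducingU_invariantS}) gives $\phi g$ in the closure, yielding $\cM=\overline{\spn}\{U^k\widehat S^j \phi:k\in\Z,j\in\N_0\}$.

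Applying the second half of Theorem~\ref{reducing_U} to this generating family then gives
\begin{equation*}
J_\cM(\la)=\overline{\spn}\{(\widehat S^j\phi)(\la):j\in\N_0\}=\overline{\spn}\{z^j\phi(\la):j\in\N_0\}\quad\text{a.e. }\la\in\TT.
\end{equation*}
To identify this last span with $\phi(\la)H^2$ I would use that $\{z^j\}_{j\in\N_0}$ is an orthonormal basis of $H^2$, so multiplication by $\phi(\la)$ sends this basis onto a (norm-preserving, when $\phi(\la)$ is inner) generating set for $\phi(\la)H^2$, making the two closed subspaces coincide.

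\textbf{The main obstacle} is the boundedness/continuity issue that appears twice: to close the span of $\{\phi g_n\}$ under the $L^2$-limit, and to realize $\phi(\la)H^2$ as an actual closed subspace of $H^2$ coinciding with the span of the $z^j\phi(\la)$. Both steps require that $\phi(\la)$ acts boundedly on $H^2$ a.e., which is precisely the setting of the preceding corollary (where $\phi(\la)$ is inner on $\sigma(\cM)$ and zero elsewhere), so in the natural context of this proposition the obstacle dissolves.
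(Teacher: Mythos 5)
Your proof is correct and takes a genuinely different route from the paper's. The paper argues directly at the level of range functions: it introduces the reducing subspace $\cN$ whose range function is $\la\mapsto\phi(\la)H^2$, gets $\cM\subseteq\cN$ from the pointwise inclusion $f(\la)=\phi(\la)g(\la)\in\phi(\la)H^2$, and then kills $\cN\ominus\cM$ by an orthogonality computation: any $f\in\cN$ orthogonal to $\cM$ satisfies $f(\la)=\phi(\la)h_\la$ and $\langle\phi(\la),f(\la)\rangle_{H^2}=0$ a.e., forcing $h_\la=0$ on $\supp\phi$. You instead exhibit the explicit countable generating family $\cA=\{\widehat S^j\phi\}_{j\in\N_0}$ for $\cM$ and invoke the generator formula in the second half of Theorem~\ref{reducing_U}, which hands you $J_\cM(\la)=\overline{\spn}\{z^j\phi(\la):j\in\N_0\}=\phi(\la)H^2$ in one stroke. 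Your approach buys a cleaner identification of the range function (the paper's final display, read literally, equates a scalar with an element of $H^2$ and really only extracts one Fourier coefficient of $h_\la$ unless one also tests against $\widehat S^j\phi$ for all $j$ --- which your generating family does automatically); the paper's approach avoids having to verify that the span of the $U^k\widehat S^j\phi$ is dense in $\cM$. The boundedness caveat you flag is real but is equally implicit in the statement and in the paper's own proof: for a general $\phi\in L^2(\TT,H^2)$ the set $\phi L^2(\TT,H^2)$ need not lie in $L^2(\TT,H^2)$, nor need $\phi(\la)H^2$ be closed, so the proposition is only meaningful under the standing hypothesis (used everywhere it is applied) that $\phi(\la)$ is inner on its support and zero elsewhere; under that hypothesis every step of your argument goes through.
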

	
	\begin{proof}
		Let $J$ be the measurable range function given by $J(\la)=\phi(\la)H^2$ a.e. $\la\in\TT$ and let $\cN$ be the subspace of $L^{2}(\TT,H^{2})$ whose range function is $J$. We will prove that $\cM=\cN$.
		
		For $g\in L^{2}(\TT,H^{2})$, we have that $f:=\phi g\in\cM$. Then, since $f(\la)=\phi(\la) g(\la)\in\phi(\la)H^2$ a.e. $\la\in\TT$, it holds that $J_\cM(\la)\subseteq  \phi(\la)H^2$ a.e. $\la\in\TT$ and then $\cM\subseteq  \cN$. 
		
		To prove that the other inclusion also holds, assume that there is $f\in\cN$ such that $f\perp\cM$. 
		Notice that, since $f(\la)\in \phi(\la) H^2$ for a.e. $\la\in\TT$ we can write $f(\la)=\phi(\la)h_\la$ for some $h_\la\in H^2$ for a.e $\la\in\TT$. 
		Therefore, since $\phi\in\cM$ and, by Lemma \ref{lem:props-J}, $f(\la)\in J_\cM(\la)^\perp$ for 
		a.e. $\la\in\TT$, we obtain that 
		$$ 0=\langle \phi(\la), f(\la)\rangle_{H^2} = h_\la\|\phi(\la)\|_{H^2}^2$$
		for a.e. $\la\in\TT$. Then, $h_\la=0$ for a.e $\la\in\supp(\phi)$. Hence $f(\la)=0$ for a.e. $\la\in\TT$ and thus $f=0$ proving that $\cM=\cN$.
	\end{proof}
	
	
	\subsection{Uniqueness}\label{uniqueness}
	
	We will prove here the uniqueness of the characterizations in Theorem \ref{characterization_hatS_inv} and Corollary \ref{M_reducingU_invariantS}.
	
	\subsubsection{Proof of Theorem  \ref{thm:uniqueness}}
	\begin{proof} 
		Let $\Psi := (\Phi_1|_{\cW_{1}})^{-1} \Phi_2$.	
		This operator is well-defined and bounded acting on $L^{2}(\TT,H^{2}_{\KK})$ and satisfies that $\Psi(\cW_2)=\cW_1$ and $\Phi_2=\Phi_1\Psi$. In fact, it is a partial isometry with initial space $\cW_2$. 
		Indeed, for $f\in\cW_2$, we have that
		$\|f\| = \|\Phi_2(f)\|=\|\Phi_1(\Psi(f))\|=\|\Psi(f)\|,$
		and for $f\in\cW_2^\perp$,  $\Psi(f)=0$.
		
		Let us see now that  $\Psi$ commutes with $U$ and $\widehat{S}$.
		Since $\Phi_2=\Phi_1\Psi$ and using the commutativity of $\Phi_1$ and $\Phi_2$ with $U$, for every $f\in L^{2}(\TT,H^{2}_{\KK})$ we have
		\begin{equation}
			\Phi_2U f = \Phi_1\Psi\, U f \quad \text{and} \quad
			\Phi_2U f = U\Phi_2 f = U\Phi_1\Psi f = \Phi_1U \Psi f .
		\end{equation}
		So we conclude that $\Phi_1\Psi U f = \Phi_1U \Psi f$. Now, if $f \in \cW_2$ then $U \Psi f$ and $\Psi U f $ belong to $\cW_1$ and, since $\Phi_1$ is one-to-one in $\cW_1$,  we have that $\Psi U f = U \Psi f.$
		On the other hand, if $f\in \cW_2^{\perp}$ then $U \Psi f = 0 = \Psi U f$ because  $\cW_2$ is the initial space of $\Psi$  and is reducing for $U$.
		The proof for $\widehat{S}$ is the same replacing $U$ by $\widehat{S}$ above.
	\end{proof}
	
	\begin{theorem}
		Let $\phi_1,\phi_2\in L^2(\TT,H^2)$ be two functions such that $\phi_i(\la)$ is an inner function for a.e. $\la\in E_i:=\text{Supp}\, \phi_i$ for $i=1,2$ and $\phi_1L^2(\TT,H^2)=\phi_2 L^2(\TT,H^2)$. Then,  $E_1=E_2:=E$ and there exists a function $\psi\in L^2(\TT,H^2)$ such that $\psi(\la)$ is an inner function for a.e. $\la\in E$ and $\phi_2 = \psi \phi_1$.
	\end{theorem}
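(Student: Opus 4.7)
The plan is to reduce the statement to a pointwise application of the uniqueness part of Beurling's Theorem (Theorem~\ref{thm:beurling}) and then package the resulting unimodular scalars as a measurable $H^2$-valued function. First I would apply Proposition~\ref{porp:range-function-phiL2} to both sides of $\phi_1 L^2(\TT,H^2)=\phi_2 L^2(\TT,H^2)$. By the one-to-one correspondence between reducing subspaces and measurable range functions (Theorem~\ref{reducing_U}), the associated range functions must agree almost everywhere, so
\begin{equation}
\phi_1(\la) H^2 = \phi_2(\la) H^2 \quad \text{a.e. } \la\in\TT.
\end{equation}
Since $\phi_i(\la)H^2\ne \{0\}$ exactly when $\phi_i(\la)\ne 0$, comparing spectra gives $E_1=E_2=:E$ up to null sets.

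Next, for a.e.\ $\la\in E$ both $\phi_1(\la)$ and $\phi_2(\la)$ are inner, and they generate the same Beurling subspace of $H^2$. The uniqueness clause of Theorem~\ref{thm:beurling} yields a unimodular constant $c(\la)\in\TT$ such that $\phi_2(\la)=c(\la)\phi_1(\la)$ in $H^2$. To extract $c$ as a measurable function, I would use the pointwise identity
\begin{equation}
c(\la)=\frac{\langle \phi_2(\la),\phi_1(\la)\rangle_{H^2}}{\|\phi_1(\la)\|_{H^2}^{2}}\quad\text{for a.e. }\la\in E,
\end{equation}
which is well defined since $\phi_1(\la)\ne 0$ on $E$, and measurable because the numerator and denominator are measurable scalar functions of $\la$ (this follows from the measurability of $\phi_1,\phi_2$ as $H^2$-valued maps).

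Then I would define $\psi:\TT\to H^2$ by
\begin{equation}
\psi(\la)(z)=\begin{cases} c(\la) & \la\in E,\\ 0 & \la\notin E,\end{cases}
\end{equation}
i.e. $\psi(\la)$ is the constant function on $\TT$ equal to $c(\la)$. Measurability of $\psi$ as an $H^2$-valued function is inherited from that of $c$, and since $\|\psi(\la)\|_{H^2}=|c(\la)|=1$ on $E$ and $0$ off $E$, we have $\|\psi\|_{L^2(\TT,H^2)}^2=|E|<\infty$, so $\psi\in L^2(\TT,H^2)$. By construction $|\psi(\la)(z)|=1$ for a.e.\ $z\in\TT$ when $\la\in E$, so $\psi(\la)$ is inner on $E$. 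Finally, for a.e.\ $\la\in E$ and a.e.\ $z\in\TT$,
\begin{equation}
(\psi\phi_1)(\la)(z)=c(\la)\phi_1(\la)(z)=\phi_2(\la)(z),
\end{equation}
and both sides vanish off $E$, so $\phi_2=\psi\phi_1$.

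The only delicate point is verifying the measurability of the scalar map $\la\mapsto c(\la)$; the formula above bypasses the potential difficulty of choosing a uniform evaluation point in $z$ and reduces the question to measurability of standard $L^2$-pairings, which is automatic. Everything else is a direct transcription of Beurling's uniqueness to the fiberwise setting afforded by Proposition~\ref{porp:range-function-phiL2}.
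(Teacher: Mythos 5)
Your proposal is correct and follows essentially the same route as the paper: apply Proposition~\ref{porp:range-function-phiL2} to identify the fiberwise subspaces $\phi_1(\la)H^2=\phi_2(\la)H^2$, invoke the uniqueness clause of Beurling's Theorem to obtain unimodular constants $c(\la)$, and assemble them into a measurable function $\psi$. The only difference is that you make the measurability of $\la\mapsto c(\la)$ explicit via the formula $c(\la)=\langle \phi_2(\la),\phi_1(\la)\rangle_{H^2}/\|\phi_1(\la)\|_{H^2}^{2}$, a point the paper's proof asserts without detail; this is a welcome refinement rather than a different argument.
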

	
	\begin{proof}
		As $\phi_1L^2(\TT,H^2)=\phi_2 L^2(\TT,H^2)$, it is clear that $E_1=E_2=E$.
		Moreover, by Proposition \ref{porp:range-function-phiL2} for a.e. $\la\in E$, we have that
		$$\phi_{1}(\lambda)H^{2} = \phi_{2}(\lambda) H^{2}.$$
		As for $\la\in E$,  $\phi_{1}(\lambda)$, $\phi_{2}(\lambda)$ are  inner functions, by Theorem \ref{invariant_subspace_Hardy}, we obtain that for a.e. $\la\in E$, $\phi_{1}(\lambda)/\phi_2(\la)=c_\la$, where $c_{\lambda}\in\CC$ with $|c_{\lambda}|=1$.
		
		Since $\phi_{1}$, $\phi_{2}$ are measurable functions, the function  $\psi:\TT\rightarrow \CC$, given by $\psi(\lambda)=c_{\lambda}$ for $\la\in E$ and $\psi(\la)=0$ for $\la\in \TT\setminus E$, is measurable. Also $\psi(\la)$ is inner for a.e. $\la\in E$.
	\end{proof}

	\subsection{Multiplication-invariant subspaces}\label{MIS}
	
	In this section, we discuss how the results we present in this paper can be stated with more generality, by replacing the circle $\TT$ with a $\sigma$-finite measure space $(X,\mu)$ for which $L^2(X):=L^2(X,\mu)$ is separable. Given a Hilbert space $\KK$, we define $L^2(X,\KK)$ in the same manner as in Subsection \ref{sec:vector-valued-functions}, where the vector-valued functions are defined on $X$.
	
	\begin{definition}\label{def:mult-inv}
		Let $\cM$ be a closed subspace of $L^2(X,\KK)$. We say that $\cM$ is multiplication invariant if for every $f\in\cM$ and $g\in L^\infty(X)$, it holds that $fg \in \cM$.
	\end{definition}
	
	To see that this property holds, it suffices to check the multiplication invariance over a subset of $L^\infty(X)$ called determining set. A subset $\mathcal D$ of $L^\infty(X)$ is said to be a determining set for $L^1(X)$ if for every $f\in L^1(X)$, it holds that
	$$\int_X fg \,d\mu = 0 \quad \forall g\in \mathcal D \quad \Rightarrow \quad f=0.$$
	
	For instance, when $X=\TT$ and $\mu$ is the normalized Lebesgue measure, the system $\{\xi^j\}_{j\in\Z}$ 
	is a determining set for $L^1(\TT)$ where $\xi(\la)=\la$, $\la\in\TT$. Observe, then, that a subspace $\cM\subseteq L^2(\TT,\KK)$ is reducing for the bilateral shift $U$ if it is multiplication invariant, given that it is invariant with respect to the determining set $\{\xi^j\}_{j\in\Z}$.
	
	\begin{definition}\label{def:mult-inv-op}
		We say that an operator $\Phi:L^2(X,\KK)\to L^2(X,\KK)$ is multiplication invariant if it is bounded and for every $\phi\in L^\infty(X)$, $\Phi$ commutes with the multiplication operator $M_\phi:L^2(X,\KK)\to L^2(X,\KK)$, defined by $M_\phi(f)=\phi f$ for $f\in L^2(X,\KK)$. 
	\end{definition}
	
	As before, if $\mathcal D$ is  a determining subset for $L^1(X)$, to see that $\Phi$ is multiplication invariant, it suffices to show that $\Phi$ commutes with $M_\phi$ for every $\phi\in \mathcal D$.
	
	In the case of $X=\TT$, it is clear that a bounded operator $\Phi:L^2(X,\KK)\to L^2(X,\KK)$ is multiplication invariant if and only if $\Phi$ commutes with the bilateral shift $U$.
	
	The proof of Theorem \ref{characterization_hatS_inv} is constructed over two main concepts. On the one hand, the existence of a range function for the subspaces of $L^2(\TT,\KK)$ which are reducing for $U$. This existence is assured by Theorem \ref{reducing_U} that, as we mention before, still holds in the general case of $L^2(X,\KK)$, see \cite[Theorem 2.4]{BR}, allowing the ensuing results of Subsection \ref{sec:reducing-subspaces} to be stated in the general setting.
	On the other hand, the existence of an operator-valued function $F:\TT\to \cB(\KK)$ associated to an operator in $\cB(L^2(\TT,\KK))$ that commutes with $U$ as stated in Proposition \ref{commutant_U_S}. This result can be extended to the general setting by \cite[Theorem 3.7]{BI} which shows that a multiplication invariant operator $\Phi:L^2(X,\KK)\to L^2(X,\KK)$ admits a range operator \cite[Definition 3.6]{BI}.
	
	Moreover, the concept of full-Hardy space that we introduced in Section \ref{full-Hardy}  can be stated in $L^2(X, H^2_\KK)$. Therefore, we obtain the following version of Theorem \ref{characterization_hatS_inv} in $L^2(X, H^2_\KK)$.  
	\begin{theorem}
		Let $\mathcal{M}\subseteq L^{2}(X,H^{2}_{\KK})$ be a closed subspace. The following statements are equivalent:
		\begin{enumerate}
			\item[\rm i)] $\mathcal{M}$ is mutiplication invariant  and invariant under $\widehat{S}$.
			\item[\rm ii)] There exists  a full-Hardy subspace $\cW\subseteq L^{2}(X,H^{2}_{\KK})$ and a partial isometry \\ $\Phi:L^{2}(X,H^{2}_{\KK}) \rightarrow L^{2}(X,H^{2}_{\KK})$ with initial space $\cW,$  that is multiplication invariant  and commutes with $\widehat{S}$
			such that $\cW \simeq_{\Phi} \cM.$
		\end{enumerate}
	\end{theorem}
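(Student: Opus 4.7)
The plan is to follow the proof of Theorem \ref{characterization_hatS_inv} line by line, substituting the tools available in the general measure-space setting. The two pillars I will rely on are: the range function description of closed multiplication-invariant subspaces of $L^{2}(X,\KK)$ from \cite[Theorem 2.4]{BR}, and the correspondence between multiplication-invariant operators on $L^{2}(X,\KK)$ and measurable range operators from \cite[Theorem 3.7]{BI}. With these available, the content of Subsection \ref{sec:reducing-subspaces} (in particular Lemma \ref{measurable_sets} and Lemma \ref{lem:props-range-op}) extends verbatim, and both the definition of full-Hardy subspace in $L^{2}(X,H^{2}_{\KK})$ and Theorem \ref{prop:W-reduncing-for-U-S} carry over without change by the same proofs.

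For i)$\Rightarrow$ii), I would first verify that the wandering subspace $\cR=\cM\ominus\widehat{S}\cM$ is multiplication invariant. Since $\widehat{S}$ commutes pointwise with every $M_\phi$, $\phi\in L^{\infty}(X)$, both $\cM$ and $\widehat{S}\cM$ are multiplication invariant; invariance of $\cM$ under the family $\{M_\phi:\phi\in L^\infty(X)\}$ (which is closed under conjugation) forces invariance of $(\widehat{S}\cM)^{\perp}\cap\cM=\cR$. Applying the generalized Lemma \ref{measurable_sets} to $\cR$ yields a measurable partition $\{A_n\}_{n\in\N_{0}\cup\{\infty\}}$ of $X$ and functions $\{\phi_i\}_{i}\subset L^{\infty}(X,H^{2}_{\KK})$ whose values form orthonormal bases of the fibers $J_\cR(x)$.

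I would then fix an orthonormal basis $\{\var_i\}_{i\in I}$ of $\KK$, define the measurable range function $J$ in $\KK$ by $J(x)=\spn\{\var_1,\dots,\var_n\}$ on $A_n$ and $J(x)=\KK$ on $A_\infty$, and let $\cW$ be the full-Hardy subspace with base $J$. Fiber-wise, I build the operator-valued function $F\colon X\to\cB(H^{2}_{\KK})$ by setting $F(x)(S^j\var_i)=S^j\phi_i(x)$ on the indices supported by $A_n$ and extending by zero on $(H^{2}_{J(x)})^\perp$. The measurability check is the same computation as in the circle case and produces $F\in\cF$ such that $F(x)$ is a partial isometry with initial space $H^{2}_{J(x)}$, commutes with $S$, and satisfies $F(x)(H^{2}_{J(x)})=J_\cM(x)$ a.e. The associated operator $\Phi=\widehat{F}$ is multiplication invariant by the range-operator correspondence of \cite[Theorem 3.7]{BI}, commutes with $\widehat{S}$ because each $F(x)$ does, and the generalized items of Lemma \ref{lem:props-range-op} give that $\Phi$ is a partial isometry with initial space $\cW$ and $\Phi(\cW)=\cM$.

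The implication ii)$\Rightarrow$i) is formal: $\cW$ is reducing for $\widehat{S}$ and multiplication invariant by the general analogue of Theorem \ref{prop:W-reduncing-for-U-S}, so pushing forward by $\Phi$, which commutes both with $\widehat{S}$ and with every $M_\phi$, delivers $\widehat{S}\cM\subseteq\cM$ and $M_\phi\cM\subseteq\cM$ for all $\phi\in L^\infty(X)$. The main point that needs care is confirming that Lemma \ref{measurable_sets} genuinely extends to $(X,\mu)$; this is essentially bookkeeping, since \cite[Theorem 2.6]{BR} is already formulated in the required generality and the extension of \cite[Lemma 2.9]{ACCP1} adapts in the same way. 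No new analytic idea is required beyond the ingredients already present in \cite{BR} and \cite{BI}.
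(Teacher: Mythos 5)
Your proposal is correct and follows essentially the same route as the paper, which likewise obtains the general statement by rerunning the proof of Theorem \ref{characterization_hatS_inv} with \cite[Theorem 2.4]{BR} supplying the range-function description of multiplication-invariant subspaces and \cite[Theorem 3.7]{BI} supplying the range-operator correspondence. Your write-up is in fact somewhat more explicit than the paper's sketch (e.g.\ in checking that the wandering subspace is multiplication invariant), but no new idea or different decomposition is involved.
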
 
	
	\section{Acknowledgements}
	This research was supported by grants: UBACyT 20020170100430BA, PICT 2018-3399 (AN- PCyT), PICT 2019-03968 (ANPCyT) and CONICET PIP 11220110101018. D.C. was supported by the European Union’s programme Horizon Europe, HORIZON-MSCA-2021-PF-01, Grant agreement No. 101064206.


\begin{thebibliography}{1}
		
		\bibitem{ACCP1} \textit{A.~Aguilera},  \textit{C.~Cabrelli}, \textit{D.~Carbajal} and \textit{V.~Paternostro},  Diagonalization of shift-preserving operators,  Adv.  Math.,  \textbf{389}, (2021).	
		
		\bibitem{ACCP21} \textit{A.~Aguilera},  \textit{C.~Cabrelli}, \textit{D.~Carbajal} and \textit{V.~Paternostro}, Dynamical sampling for shift-preserving operators, Appl. Comput. Harmon. Anal., \textbf {51}, (2021), 258--274.
		
		\bibitem{ACCP22} \textit{A.~Aguilera},  \textit{C.~Cabrelli}, \textit{D.~Carbajal} and \textit{V.~Paternostro}, Frames by orbits of two operators that commute. Appl. Comput. Harmon. Anal. \textbf{66} (2023), 46--61.
		
		\bibitem{DS1}
		\textit{A.~Aldroubi, C.~Cabrelli, A.F. Cakmak, U.~Molter, and A.~Petrosyan}, Iterative actions of normal operators, J. Funct. Anal., \textbf{272}, 3, (2017) 1121--1146.
		
		\bibitem{DS2}
		\textit{A.~Aldroubi, C.~Cabrelli, U.~Molter, and S.~Tang}, Dynamical Sampling,
		Appl. Comput. Harmon. Anal.,  \textbf{42}, 3, (2017), 378--401. 
		
		\bibitem{DS3}
		\textit{A.~Aldroubi, J.~Davis, and I.~Krishtal}, Dynamical sampling: time-space trade-off,
		Appl. Comput. Harmon. Anal., \textbf{34}, 3, (2013), 495--503.
		
		\bibitem{Ber49} \textit{A.~Beurling}, On two problems concerning linear transformations in Hilbert space, Acta Math., \textbf{81}, (1949), 239--255.
		
		\bibitem{BI} \textit{M.~Bownik}, and \textit{J.~Iverson},  Multiplication-invariant operators and the classification of LCA group frames, J. Funct. Anal., \textbf{280}, 2, (2021). 
		
		\bibitem{BR} \textit{M.~Bownik}, and \textit{K.~Ross}, The Structure of Translation-Invariant Spaces on Locally Compact Abelian Groups, J. Fourier Anal. Appl., \textbf{21},  (2015),  849--884.
		
		\bibitem{DS4}
		\textit{C.~Cabrelli, U.~Molter, V.~Paternostro, and F.~Philipp}, Dynamical sampling on finite index sets,  J. Anal. Math., \textbf{140} (2020), 2, 637--667.
		
		\bibitem{DS6}
		\textit{C. Cabrelli, U. Molter, and D. Suarez}, Frames of iterations and vector valued model spaces, preprint arXiv:2203.01301 (2022).
		
		\bibitem{DS5}
		\textit{O.~Christensen, M.~Hasannasab, and F.~Philipp}, Frame properties of operator orbits, Math. Nachr., \textbf{293}, 1, (2020), 52--66.
		
		
		\bibitem{Co} \textit{J.B.~Conway}, A Course in Functional Analysis, 2nd ed, Springer-Verlag New York, Inc,  (1990).
		
		\bibitem{GMR16} \textit{R.S.~Garcia, J.~Mashreghi, and W.T.~Ross}, Introduction to model spaces and their operators, Cambridge Studies in Advanced Mathematics, 148. Cambridge University Press, Cambridge, (2016). 	
		\bibitem{Ha61} \textit{P.~Halmos}, Shifts on Hilbert spaces, J. fur Reine Angew. Math., \textbf{208}, (1961), 102--112.
		
		\bibitem{He64}  \textit{H.~Helson}, Lectures on Invariant Subspaces, Academic Press, London, (1964).
		
		\bibitem{HL61}  \textit{H.~Helson},  and  \textit{D.~Lowdenslager}, Invariant subspaces,  Proc. Internat. Sympos. Linear Spaces (Jerusalem, 1960), (1961), 251--262.
		
		\bibitem{Lax59} \textit{P.D.~Lax},  Translation invariant subspaces, Acta Mathe., \textbf{101}, (1959), 163--178 .
		
		\bibitem{RR} \textit{H.~Radjavi}, and \textit{P.~Rosenthal},  
		Invariant Subspaces, Springer-Verlag, Berlin (1973).
		
		\bibitem{Sri64} \textit{T.P.~Srinivasan},  Double invariant subspaces, Pacific J. Math., \textbf{14}, 2, (1964), 701--707.		
	\end{thebibliography}
\end{document}